\newtheorem{theorem}{Theorem}[section]
\newtheorem{lemma}[theorem]{Lemma}
\newtheorem{corollary}[theorem]{Corollary}
\newtheorem{proposition}[theorem]{Proposition}
\newtheorem{GACconjecture}{Generalized Andrews-Curtis conjecture}
\newtheorem*{ACconjecture}{Andrews-Curtis conjecture}
\newenvironment{sproof}[1]
{\begin{proof}[#1]} {\end{proof}}
\newcommand{\Z}{\mathbb Z}
\newcommand{\Q}{\mathbb Q}
\newcommand{\rk}{\textnormal{rk}}
\newcommand{\BS}{\textnormal{BS}}
\newcommand{\M}{\textnormal{M}}
\newcommand{\rec}{\operatorname{rec}}
\newcommand{\AC}{\operatorname{AC}}
\newcommand{\GACC}[1]{\operatorname{GACC} #1}
\newcommand{\GACCw}[1]{\operatorname{GACC} #1_{n > w}}
\newcommand{\GACCone}[1]{\operatorname{GACC} #1_{n = 1}}
\newcommand{\diam}{\operatorname{diam}}
\newcommand{\br}[1]{\lbrack #1 \rbrack}
\newcommand{\Pres}[2]{\left\langle #1 \ \big\vert\  #2 \right\rangle}
\newcommand{\ogb}{\overline{\mathbf{g}}}
\newcommand{\ohb}{\overline{\mathbf{h}}}
\newcommand{\gb}{\mathbf{g}}
\newcommand{\hb}{\mathbf{h}}
\newcommand{\DG}{\lbrack G, G \rbrack}
\newcommand{\Gab}{G_{ab}}
\newcommand{\GW}{G/W(G)}
\newcommand{\apm}{a^{\pm 1}}
\newcommand{\bpm}{b^{\pm 1}}
\newcommand{\ncl}[1]{\langle #1 \rangle^G}
\newcommand{\ZC}{ \Z \lbrack C \rbrack}
\newcommand{\Cx}{\mathcal{C}}
\newcommand{\W}{\mathcal{W}}
\newcommand{\MN}{\mathcal{MN}}
\title{On Andrews-Curtis conjectures for soluble groups}
\address{EPFL ENT CBS BBP/HBP. Campus Biotech. B1 Building, Chemin des mines, 9\\Geneva 1202, Switzerland}
\email{luc.guyot@epfl.ch}
\author{Luc Guyot}
\date{\today}
\keywords{Andrews-Curtis conjecture; Nielsen equivalence; recalcitrance; coessential abelianization; metabelian groups; Baumslag-Solitar groups; N-Frattini subgroup}
\subjclass[2010]{Primary 20F05, Secondary 20F16}
\begin{document}
\maketitle
\begin{abstract}
The Andrews-Curtis conjecture claims that every normally generating $n$-tuple of a free group $F_n$ of rank $n \ge 2$ can be reduced to a basis by means of Nielsen transformations and arbitrary conjugations. 
Replacing $F_n$ by an arbitrary finitely generated group yields natural generalizations whose study may help disprove the original and unsettled conjecture.
We prove that every finitely generated soluble group satisfies the generalized Andrews-Curtis conjecture 
in the sense of Borovik, Lubotzky and Myasnikov. In contrast, we show that some soluble 
Baumslag-Solitar groups do not satisfy the generalized Andrews-Curtis conjecture in the sense of Burns and Macedo\'nska.
\end{abstract}

\section{Introduction}
The Andrews-Curtis conjecture ($\AC$ conjecture) is a long-standing open problem in combinatorial group theory with a tight link to open problems in low-dimensional topology \cite{Zee64, AC65, Wri75}. 
The conjecture asserts that any $n$-tuple whose components normally generate a free group of finite rank $n \ge 2$ can be transitioned to a basis by means of Nielsen transformations and arbitrary conjugations. This article, which is only concerned with algebraic aspects, addresses two generalizations of this conjecture to finitely generated soluble groups. Our main results are Theorem \ref{ThGACSoluble}, Theorem \ref{ThRecalcitranceSoluble} and Corollary \ref{CorBS} below. Theorem \ref{ThGACSoluble} elaborates on results of \cite{Mya84, BO12} to settle the generalized Andrews conjecture in the sense of \cite{BLM05} for soluble groups. Theorem \ref{ThRecalcitranceSoluble} and Corollary \ref{CorBS} solve \cite[Open Problem]{BO12} and show in particular that the metabelian Baumslag-Solitar group $\Pres{a, b}{aba^{-1} = b^{11}}$ does not satisfy the generalized Andrews conjecture in the sense of \cite{BM93}.

 Before we can state the $\AC$ conjecture with precise terms, we need to introduce some definitions.

Let $G$ be a finitely generated group. We denote by $\rk(G)$ the \emph{rank of $G$}, i.e., the minimal number of elements which generate $G$. 
For $n \ge \rk(G)$, we define a \emph{generating $n$-vector} of $G$ as an ordered $n$-tuple whose components generate $G$. 
An \emph{elementary Nielsen transformation} of $G^k$ for $k \ge 1$, is a transformation which 
replaces for some $i$ the component $g_i$ of $\gb = (g_1, \dots, g_k) \in G^k$ by either $g_j g_i$ for some $j \neq i$ or by $g_i^{-1}$ and leaves the other components unchanged. 
We call \emph{Nielsen transformation} the composition of any finite number of elementary Nielsen transformations.
The Nielsen transformations of $G^n$ for $n \ge \rk(G)$ clearly preserve the set of generating $n$-vectors of $G$. 
Two such vectors are said to be \emph{Nielsen equivalent} if they can be related by a Nielsen transformation.

 We say that a set $\{g_1,\dots, g_n\}$ of elements of a group $G$ \emph{normally generates $G$} if the normal subgroup $\langle g_1, \dots, g_n\rangle^G$ generated by these elements is $G$.
We denote by $w(G)$ the \emph{weight of $G$}, i.e., the minimal number of elements which normally generate $G$.
For $n \ge w(G)$,  we define a \emph{normally generating $n$-vector} of $G$ as an ordered $n$-tuple whose components normally generate $G$.

The elementary transformations of Andrews and Curtis \cite{AC66}, are the elementary Nielsen transformations supplemented by the transformations replacing a component $g_i$ by any of its conjugates $g_i^{g} \Doteq g^{-1} g_i g$ with $g \in G$, and leaving the other components unchanged. We refer to these transformations as the \emph{elementary $\AC$-transformations}. 
We call \emph{$\AC$-transformation} the composition of any finite number of elementary $\AC$-transformations
The $\AC$-transformations of $G^n$ for $n \ge w(G)$ clearly preserve the set of normally generating $n$-vectors of $G$. Two such vectors are said to be \emph{$\AC$-equivalent} if they can be related by an $\AC$-transformation.
The $\AC$ conjecture claims that 
\begin{ACconjecture}[\cite{AC65}, \cite{AC66}]
Every normally generating $n$-vector of a free group $F_n$ of rank $n \ge 2$ is $\AC$-equivalent to some basis of $F_n$.
\end{ACconjecture}
Since any two bases of $F_n$ are Nielsen equivalent \cite[Proposition 4.1]{LS77}, the conjecture can be rephrased saying that any two normally generating $n$-vectors of $F_n$ are $\AC$-equivalent.
We observe that, for every $n \ge 1$, an $\AC$-transformation of $G^n$ induces an $\AC$-transformation on $Q^n$ for any quotient $Q$ of $G$. Thus the existence of non-$\AC$-equivalent vectors in some quotient of $F_n$ would disprove the $\AC$ conjecture, provided these vectors can be lifted to $F_n$. This never holds for Abelian quotients:

\begin{proposition} \label{PropAbel}
Let $F_n$ be a free group of rank $n \ge 1$ and $\pi: F_n \twoheadrightarrow G$ an epimorphism onto an Abelian group $G$. Then the images under $\pi$ of any two normally generating $n$-vectors of $F_n$ are $\AC$-equivalent.
\end{proposition}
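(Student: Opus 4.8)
The plan is to reduce everything to linear algebra over $\Z$ via the abelianization of $F_n$, exploiting the fact that in an Abelian group conjugation is trivial, so that $\AC$-transformations and Nielsen transformations of $G^n$ coincide. First I would record that $\pi$ factors as $\pi = p \circ \alpha$, where $\alpha : F_n \twoheadrightarrow \Z^n$ is the abelianization map and $p : \Z^n \twoheadrightarrow G$ is the induced epimorphism (which exists precisely because $G$ is Abelian). Since an $\AC$-transformation of $G^n$ is nothing but a Nielsen transformation when $G$ is Abelian, it suffices to prove that the images under $\pi$ of any two normally generating $n$-vectors are Nielsen equivalent in $G^n$.

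Next I would show that $\alpha$ sends normally generating $n$-vectors of $F_n$ to $\Z$-bases of $\Z^n$. Indeed, if $\gb = (g_1, \dots, g_n)$ normally generates $F_n$, then applying the surjection $\alpha$ to $\langle g_1, \dots, g_n \rangle^{F_n} = F_n$ yields $\langle \alpha(g_1), \dots, \alpha(g_n) \rangle = \Z^n$, because the image of a normal closure under a homomorphism onto an Abelian group is the ordinary subgroup generated by the images of the generators. An $n$-element generating set of $\Z^n$ is automatically a basis, since a surjective endomorphism of $\Z^n$ is bijective; hence $(\alpha(g_1), \dots, \alpha(g_n))$ is a $\Z$-basis of $\Z^n$.

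The key step is then that any two $\Z$-bases of $\Z^n$ are Nielsen equivalent in $(\Z^n)^n$. Viewing a basis as the columns of a matrix in $\mathrm{GL}_n(\Z)$, the elementary Nielsen transformations $v_i \mapsto v_i + v_j$ and $v_i \mapsto -v_i$ realize, on abelianized tuples, elementary column operations; by the classical fact that $\mathrm{GL}_n(\Z)$ is generated by elementary transvections and sign changes, every basis can be carried to the standard one, so any two bases are Nielsen equivalent. Finally, Nielsen equivalence is natural with respect to $p$: applying $p$ coordinate-wise to a Nielsen transformation of $(\Z^n)^n$ produces a Nielsen transformation of $G^n$ relating the images. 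Combining these facts, for any two normally generating $n$-vectors $\gb$ and $\hb$ of $F_n$, the bases $\alpha(\gb)$ and $\alpha(\hb)$ are Nielsen equivalent in $(\Z^n)^n$, whence $\pi(\gb) = p(\alpha(\gb))$ and $\pi(\hb) = p(\alpha(\hb))$ are Nielsen equivalent, and therefore $\AC$-equivalent, in $G^n$.

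I expect the delicate points to be bookkeeping rather than genuine obstacles: one must check that the passage to $\Z^n$ really captures all normally generating vectors (which is exactly the normal-closure-to-subgroup identity above) and that naturality of Nielsen transformations under $p$ is invoked correctly. The one substantive ingredient is the generation of $\mathrm{GL}_n(\Z)$ by elementary matrices, from which the transitivity of Nielsen equivalence on bases is immediate.
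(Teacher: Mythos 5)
Your proposal is correct and takes essentially the same route as the paper: both observe that $\AC$-transformations coincide with Nielsen transformations in an Abelian group, factor $\pi$ through the abelianization to reduce to $G = \Z^n$, and conclude from the classical fact that normally generating $n$-vectors of $F_n$ abelianize to bases of $\Z^n$, any two of which are Nielsen equivalent. The only difference is cosmetic: the paper cites \cite[Proposition 4.4]{LS77} for the $\Z^n$ case, whereas you reprove it via the generation of $\mathrm{GL}_n(\Z)$ by elementary transvections and sign changes.
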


In a finitely generated Abelian group, $\AC$-transformations and Nielsen transformations coincide and define hence the same equivalence relation. Therefore the proposition holds if it is established for $G = \Z^n$, which is well-known \cite[Proposition 4.4]{LS77}. As a by-product, we obtain that the minimal number of $\AC$-transformations needed to turn one image vector into the other is bounded above by a constant which depends only on $n$.

Because of Proposition \ref{PropAbel}, none of the potential counterexamples to the Andrews-Curtis conjecture (see, e.g., \cite{HR03}) can be verified using Abelian quotients, even though finitely generated Abelian groups do have in general Nielsen non-equivalent generating vectors (consider $\Z/5\Z$ and see Theorem \ref{ThNielsenAbel} below for a complete treatment).
Borovik,  Lubotzky and Myasnikov \cite{BLM05} formulated a generalization of the $\AC$ conjecture which takes into account \emph{false positives occuring in abelianization}, i.e., pairs of normally generating vectors whose images under abelianization are not $\AC$-equivalent.
Let $\DG$ be the commutator subgroup of $G$ and let $\pi_{ab}:G \twoheadrightarrow \Gab = G/\DG$ be the abelianization homomorphism.

\begin{GACconjecture}[$\GACC{1}$] \label{GAC1}
A finitely generated group $G$ is said to satisfy the generalized Andrews-Curtis conjecture in the sense of \cite{BLM05} if the following holds 
for every $n \ge \max(w(G), 2)$: any two normally generating $n$-vectors of $G$ are $\AC$-equivalent 
if and only if their images under abelianization are $\AC$-equivalent.
\end{GACconjecture}
The question as to whether there is a finitely generated group which does not satisfy $\GACC{1}$ remains open.
The class of groups satisfying $\GACC{1}$ comprises finite groups \cite[Theorem 1.1]{BLM05}, finitely generated groups whose maximal subgroups are normal \cite[Theorem 1.5]{Myr16} and finitely generated groups of the form $S_1 \times \cdots \times S_k$ where each factor is a non-Abelian simple group \cite[Inspection of Theorem 2.1's proof]{BLM05}.
It also comprises finitely generated free soluble groups \cite[Corollary 1]{Mya84}, and more generally, finitely generated soluble groups $G$ whose abelianization homomorphism is coessential, i.e., such that every generating $n$-vector of $\Gab$ lifts to an $n$-generating vector of $G$ for every $n \ge \rk(G)$ \cite[Theorem 3.1]{BO12}. Any of the two latter results easily implies

\begin{proposition} \label{PropSoluble}
Let $F_n$ be a free group of rank $n \ge 1$ and $\pi: F_n \twoheadrightarrow G$ an epimorphism onto a soluble group $G$. 
Then the images under $\pi$ of any two normally generating $n$-vectors of $F_n$ are $\AC$-equivalent.
\end{proposition}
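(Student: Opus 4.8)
The plan is to reduce the statement to the case of free soluble groups, for which $\GACC{1}$ is already available by \cite[Corollary 1]{Mya84}, and then to combine this with Proposition \ref{PropAbel}. I would first dispose of the degenerate case $n = 1$: here $F_1 = \Z$ is Abelian, hence so is its quotient $G$, and Proposition \ref{PropAbel} applies directly to $\pi$ itself. So assume from now on that $n \ge 2$.

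Let $d$ be the derived length of $G$, so that $G^{(d)} = 1$ and therefore $F_n^{(d)} \subseteq \ker \pi$. Writing $S = F_n / F_n^{(d)}$ for the free soluble group of rank $n$ and derived length at most $d$, the epimorphism $\pi$ factors as $F_n \xrightarrow{q} S \xrightarrow{p} G$ with $\pi = p \circ q$. Given two normally generating $n$-vectors $\gb, \hb$ of $F_n$, the vectors $q(\gb)$ and $q(\hb)$ normally generate $S$. Since $d \ge 1$ we have $F_n^{(d)} \subseteq [F_n, F_n]$, so the abelianization $S \twoheadrightarrow S_{ab}$ gets identified with the abelianization $F_n \twoheadrightarrow (F_n)_{ab} = \Z^n$ precomposed with $q$; in particular the abelianizations of $q(\gb)$ and $q(\hb)$ are precisely the images of $\gb$ and $\hb$ under $F_n \twoheadrightarrow \Z^n$. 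By Proposition \ref{PropAbel} these two images are $\AC$-equivalent in $\Z^n$. As $S$ is free soluble it satisfies $\GACC{1}$, and since $w(S) \le \rk(S) = n$ and $n \ge 2$ the threshold condition $n \ge \max(w(S), 2)$ is met; hence the ``if'' direction of $\GACC{1}$ for $S$ forces $q(\gb)$ and $q(\hb)$ to be $\AC$-equivalent in $S$.

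To conclude I would push this equivalence forward along $p$: as observed just before Proposition \ref{PropAbel}, an $\AC$-transformation of $S^n$ induces an $\AC$-transformation of $G^n$ under the quotient $p$, so $\pi(\gb) = p(q(\gb))$ and $\pi(\hb) = p(q(\hb))$ are $\AC$-equivalent in $G$, as required. I do not anticipate a genuine obstacle, the argument being a packaging of the free soluble case with the Abelian case; the only points that require care are the identification $S_{ab} = \Z^n$ matching up the two abelianizations (so that Proposition \ref{PropAbel} can be invoked for $S$), the verification of the rank threshold in the definition of $\GACC{1}$, and the separate trivial handling of $n = 1$. One could equally run the same scheme using \cite[Theorem 3.1]{BO12} in place of \cite[Corollary 1]{Mya84}, after noting that the free soluble group $S$ has coessential abelianization.
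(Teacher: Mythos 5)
Your proof is correct and is exactly the argument the paper intends when it says the proposition follows easily from \cite[Corollary 1]{Mya84} (or \cite[Theorem 3.1]{BO12}): factor $\pi$ through the free soluble quotient $S = F_n/F_n^{(d)}$, identify $S_{ab}$ with $\Z^n$ so that Proposition \ref{PropAbel} gives $\AC$-equivalence of the abelianized images, invoke $\GACC{1}$ for $S$, and push the equivalence down along $S \twoheadrightarrow G$. Your handling of the threshold $n \ge \max(w(S),2)$ and the $n=1$ case are exactly the care points the paper leaves implicit (only the trivial subcase $G=1$, where $d=0$, deserves a word, since you assume $d \ge 1$).
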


We shall establish with Corollary \ref{CorBS} below that there exist finitely generated soluble groups with non-coessential abelianization homomorphisms.
Therefore the aforementioned results do not fully settle $\GACC{1}$ in the variety of soluble groups. Elaborating on the techniques of \cite[Theorem 3.1]{BO12}, we are able to settle this conjecture for all soluble groups:

\begin{theorem} \label{ThGACSoluble}
Every finitely generated soluble group $G$ satisfies $\GACC{1}$:
for every $n \ge \max(w(G), 2)$, any two normally generating $n$-vectors of $G$ are $\AC$-equivalent if and only if their images under abelianization are $\AC$-equivalent.
\end{theorem}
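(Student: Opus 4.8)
The forward implication is immediate and requires no solubility: applying the quotient map $G \twoheadrightarrow \Gab$ to a sequence of elementary $\AC$-transformations relating $\gb$ to $\hb$ yields a sequence of elementary $\AC$-transformations relating $\pi_{ab}(\gb)$ to $\pi_{ab}(\hb)$, exactly as observed before Proposition \ref{PropAbel}. All the content is in the converse, so I assume that $\pi_{ab}(\gb)$ and $\pi_{ab}(\hb)$ are $\AC$-equivalent. Since $\Gab$ is Abelian, $\AC$-transformations there coincide with Nielsen transformations, so $\pi_{ab}(\gb)$ and $\pi_{ab}(\hb)$ are in fact Nielsen equivalent. Lifting each elementary Nielsen transformation of $\Gab$ to the corresponding elementary Nielsen transformation of $G$ (a particular $\AC$-transformation, hence preserving normal generation), I replace $\gb$ by an $\AC$-equivalent normally generating $n$-vector whose image under abelianization equals $\pi_{ab}(\hb)$. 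Thus I may assume henceforth that $\pi_{ab}(\gb) = \pi_{ab}(\hb)$, i.e. $g_i \equiv h_i \pmod{\DG}$ for every $i$.

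The observation that replaces the coessentiality hypothesis of \cite[Theorem 3.1]{BO12} is the following, valid for every finitely generated soluble $G$: every generating $n$-vector of $\Gab$ lifts to a normally generating $n$-vector of $G$. Indeed, if $(v_1, \dots, v_n)$ lifts such a vector then $G/\ncl{v_1, \dots, v_n}$ is a soluble group with trivial abelianization, hence trivial, so $\ncl{v_1, \dots, v_n} = G$. This \emph{normal coessentiality} holds unconditionally, and is precisely what is needed to transport the module-theoretic argument of \cite{BO12} to groups whose abelianization is not coessential. I then argue by induction on the derived length $d$ of $G$. For $d \le 1$ the group is Abelian, $\pi_{ab}$ is the identity, and there is nothing to prove. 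For $d \ge 2$ set $A \Doteq G^{(d-1)}$, the last nontrivial term of the derived series: it is an Abelian normal subgroup contained in $\DG$, and $Q \Doteq G/A$ is soluble of derived length $d-1$ with $Q_{ab} = \Gab$. The images of $\gb$ and $\hb$ under $G \twoheadrightarrow Q$ are normally generating $n$-vectors (with $n \ge \max(w(Q),2)$ since $w(Q) \le w(G)$) sharing the same abelianization, hence $\AC$-equivalent in $Q$ by the induction hypothesis. Lifting this $\AC$-equivalence to $G$ and invoking normal coessentiality to remain among normally generating vectors, I reduce to the case $g_i \equiv h_i \pmod{A}$ for every $i$.

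It remains to treat the core case: $A$ is an Abelian normal subgroup, $\gb$ and $\hb$ both normally generate $G$, and $h_i = g_i a_i$ with $a_i \in A$ for every $i$. Regarding $A$ as a module over $\Z[Q]$ via conjugation, I track the effect on the correction vector $(a_1, \dots, a_n) \in A^n$ of the $\AC$-transformations fixing every image $\bar g_i \in Q$. Conjugating the $i$-th component by $t \in A$ adds $t(1 - \bar g_i)$ to $a_i$, while a Nielsen move followed by conjugation by $t \in A$ and its inverse (here the hypothesis $n \ge 2$ provides a second coordinate to operate with) adds $t(1 - \bar g_j \bar g_i)$; composing such moves over all indices produces the submodule $A \cdot I_Q$ in each coordinate, where $I_Q$ is the augmentation ideal of $\Z[Q]$. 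The essential input is that the elements $1 - \bar g_i$ generate $I_Q$ as a two-sided ideal precisely because the $\bar g_i$ normally generate $Q$; this is exactly where normal generation, rather than mere generation, is used, and where the coessentiality of \cite{BO12} becomes dispensable.

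The residual corrections therefore live in the coinvariants $A/[A,G] = A/(A \cdot I_Q)$, which are central in $G/[A,G]$. On this central layer the admissible corrections are generated by the central commutators $[g_i, x]$ with $\bar x$ centralizing $\bar g_i$ in $Q$ (conjugation by such $x$ preserves the image in $Q$ while shifting the $i$-th component by a central element), and one must show that two normally generating vectors with the same image in $Q$ necessarily differ by a correction of this realizable form. I expect this last step --- matching the central residual by means of normal generation --- to be the main obstacle: it is invisible to the abelianization, so it cannot be an $\AC$-obstruction and must instead be shown to vanish intrinsically, using that both $\gb$ and $\hb$ normally generate $G$. By comparison the module computation yielding $A \cdot I_Q$ is routine and the preceding reduction steps are formal.
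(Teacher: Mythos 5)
Your formal reductions are sound, and the first half of your outline genuinely overlaps with the paper: the forward direction is the same observation, the reduction to $\pi_{ab}(\gb) = \pi_{ab}(\hb)$ is standard, and your ``normal coessentiality'' remark is exactly Lemma \ref{LemNormallyCoessential}.$(i)$. But the proposal has an explicitly acknowledged hole at the decisive step, and that hole is not a residual technicality --- it is the entire content of the theorem in the critical case $n = w(G)$. After your induction on the derived length and the $A \cdot I_Q$ computation, you must match correction vectors in the coinvariants $A/[A,G]$, and here you only say the obstruction ``must be shown to vanish intrinsically,'' offering no argument. None of your preceding machinery can supply one: the corrections already lie in $\DG$, so abelianized data is blind to this layer, and bookkeeping over $\Z[Q]$ alone cannot force the residual to vanish. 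Moreover, your framework restricts attention to $\AC$-moves fixing the image in $Q \Doteq G/A$, which systematically underestimates what is reachable: vectors in a fixed fiber can be $\AC$-equivalent through composites that leave the fiber and return, and $\M$-transformations can shift the $i$-th component by \emph{any} element of $\ncl{h_j : j \neq i}$, in particular by elements of $A \cap \ncl{h_j : j \neq i}$, far beyond your list of central commutators $\br{g_i, x}$ with $\bar x$ centralizing $\bar g_i$. So even the setup of your final obstruction analysis is too rigid to be completed as stated.

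The paper fills precisely this hole by a different mechanism, which you would essentially have to import. Instead of inducting on derived length, it uses Theorem \ref{ThNielsenAbel} to reduce to comparing $(a_1c_1, \dots, a_{n-1}c_{n-1}, b^kd)$ with $(a_1, \dots, a_{n-1}, b^k)$, where $(a_1, \dots, a_{n-1}, b)$ \emph{genuinely generates} $G$ and only the power $b^k$ need merely normally generate --- this is how the failure of coessentiality of $\pi_{ab}$ is side-stepped without ever lifting generating vectors of $\Gab$. The engine is then the Burns--Oancea weighted-commutator rewriting (Lemma \ref{LemNormalClosureN}): substituting $d^{\mp 1}$ for $b^{\pm k}$ and $c_j^{\mp 1}$ for $a_j$ inside commutator factors either creates additional occurrences of $a_i^{\pm 1}$ or drives the factor deeper into the derived series until it dies; this termination is where solubility does real work, whereas in your sketch solubility is used only formally to run the induction. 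Once each factor of $c_i$ involves $a_i^{\pm 1}$ at least twice, $\ncl{a_ic_i}$ contains $a_i$, hence $\DG \subset \ncl{a_1c_1, \dots, a_{n-1}c_{n-1}}$, and a short sequence of $\M$- and Nielsen moves finishes --- note that these closing moves pass through vectors such as $(a_1b^{-k}, \dots, b^kc_1)$ whose $Q$-images change drastically, exactly the kind of excursion your fiberwise analysis forbids.
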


If we restrict to the case $n > w(G)$, then the conclusion of Theorem \ref{ThGACSoluble} is actually valid for a class of groups which is larger than the union of all classes mentioned earlier. The definition of this class involves the $N$-Frattini subgroup $W(G)$ of a group $G$. The group $W(G)$ is the intersection of all maximal normal subgroups of $G$ if such exist, the group $G$ otherwise. 
This is obviously a characteristic subgroup of $G$ and it is straightforward to check that $W(G/W(G)) = 1$.
In addition, the group $W(G)$ coincides with the set of  elements which can be omitted from every normal generating subset of $G$, that is, if $\langle X, g\rangle^G = G$ for some subset $X$ of $G$ and an element $g  \in W(G)$, then $\langle X\rangle^G = G$ (see \cite[Satz 3.1]{Bae64} or \cite[Lemma 4.1.(1)]{BLM05}). As a result, we have $w(G) = w(G/W(G))$ whenever $G \neq W(G)$.
If every simple quotient of $G$ has a maximal subgroup, then $W(G)$ contains the Frattini subgroup $\Phi(G)$ of $G$, that is the intersection of all maximal subgroups of $G$ if such exist, the group $G$ otherwise \cite[Lemma 4]{DK60}. 
Thus the inclusion $\Phi(G) \subset W(G)$ holds in particular if $G$ is finitely generated or soluble.
But if $G$ is a simple group without maximal subgroups, e.g., one of those constructed in \cite{She80} or \cite[Theorem 35.3]{Ols91}, then $\Phi(G) = G$ whereas $W(G) = 1$, so that $\Phi(G) \not\subset W(G)$.
The quotient $G/W(G)$ is, by construction, a subdirect product of an unrestricted product of simple groups.
Since simple soluble groups are Abelian, the following is immediate:
\begin{lemma} \label{LemW} \cite[Folgerung 2.10]{Bae64}.
If $G$ is a finitely generated soluble group then $G/W(G)$ is Abelian.
\end{lemma}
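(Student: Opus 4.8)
The plan is to exploit the structural description of $W(G)$ recalled just before the statement, for which finite generation turns out to be inessential. First I would dispose of the degenerate case: if $G$ possesses no maximal normal subgroup, then $W(G) = G$ by the very definition of the $N$-Frattini subgroup, so $\GW$ is trivial and \emph{a fortiori} Abelian. Hence I may assume that the family $\mathcal{M}$ of maximal normal subgroups of $G$ is non-empty and that $W(G) = \bigcap_{M \in \mathcal{M}} M$.

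Next I would consider the diagonal homomorphism $G \to \prod_{M \in \mathcal{M}} G/M$ into the unrestricted product of the quotients $G/M$. Its kernel is precisely $\bigcap_{M \in \mathcal{M}} M = W(G)$, so it induces an embedding of $\GW$ as a subdirect product of $\prod_{M \in \mathcal{M}} G/M$. Each factor $G/M$ is simple, since $M$ is by assumption a maximal normal subgroup of $G$.

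The key step is to observe that every such factor is Abelian. Indeed $G/M$ is a quotient of the soluble group $G$, hence itself soluble and non-trivial (as $M$ is proper). Its commutator subgroup is a proper characteristic, hence normal, subgroup: properness follows from solubility, because a non-trivial soluble group has non-trivial abelianization. By simplicity of $G/M$ this commutator subgroup must then be trivial, so $G/M$ is Abelian, and in fact cyclic of prime order, being a simple Abelian group.

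Finally, an unrestricted product of Abelian groups is Abelian, and so is any of its subgroups; therefore $\GW$ is Abelian. I expect no genuine obstacle here: the only non-formal point is the verification that a simple soluble group is Abelian, which is the content of the third paragraph, while everything else is a routine application of the definition of $W(G)$ together with the universal property of the diagonal map.
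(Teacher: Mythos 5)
Your proof is correct and is essentially the argument the paper itself gives: immediately before the lemma it observes that $G/W(G)$ is, by construction, a subdirect product of an unrestricted product of simple groups, and that simple soluble groups are Abelian --- which is exactly your diagonal embedding of $G/W(G)$ into $\prod_{M \in \mathcal{M}} G/M$ combined with your third paragraph (a nontrivial soluble group has proper derived subgroup, so a simple soluble group is Abelian). Your additional care with the degenerate case $W(G) = G$ and your remark that finite generation is inessential are accurate refinements of that same approach, not a different route.
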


We say that $G$ satisfies $\GACCw{1}$ if for every $n > w(G)$, any two normally generating $n$-vectors of $G$ are $\AC$-equivalent.
As finitely generated Abelian groups satisfy $\GACCw{1}$ by Theorem \ref{ThNielsenAbel}, the case $n > w(G)$ of Theorem \ref{ThGACSoluble} 
is a direct consequence of the following more general result
\begin{proposition} \label{PropW}
Let $G$ be a finitely generated group such that $G/W(G)$ satisfies $\GACCw{1}$, then so does $G$.
\end{proposition}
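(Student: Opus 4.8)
The plan is to prove something slightly stronger and more symmetric: that \emph{every} normally generating $n$-vector of $G$ (with $n > w(G)$) is $\AC$-equivalent to one fixed vector, whence any two of them are $\AC$-equivalent to each other. Write $W = W(G)$ and let $\pi : G \twoheadrightarrow \ov{G}$ be the projection onto $\ov{G} := G/W(G)$; I assume $G$ nontrivial, the trivial case being vacuous. Since a finitely generated nontrivial group has a proper maximal normal subgroup, $W \neq G$, so $w(\ov{G}) = w(G) < n$ by the property of $W$ recalled above. I would fix once and for all a normally generating $(n-1)$-tuple $(\ov{u}_1, \dots, \ov{u}_{n-1})$ of $\ov{G}$ (possible as $w(\ov{G}) \le n-1$) together with lifts $u_i \in G$, and set $\mathbf{u} := (u_1, \dots, u_{n-1}, 1)$ as the target.

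First I would isolate two auxiliary facts. The first is an omissibility statement: for finitely generated $G$, a finite subset $Y \subseteq G$ normally generates $G$ if and only if $\pi(Y)$ normally generates $\ov{G}$. The forward direction is trivial; for the converse, if $\ncl{Y} \neq G$ then, $G$ being finitely generated, $\ncl{Y}$ lies in some maximal normal subgroup $M \subsetneq G$, and since $W \subseteq M$ by definition of $W$ we get $\ncl{Y} W \subseteq M \subsetneq G$, so $\pi(Y)$ does not normally generate $\ov{G}$. In particular $u_1, \dots, u_{n-1}$ normally generate $G$. The second fact is the standard conjugate--multiply--restore move: for $j \neq i$ and any $g \in G$, conjugating the $j$-th entry by $g$, left-multiplying the $i$-th entry by the (current) $j$-th entry, and undoing the conjugation is an $\AC$-transformation sending the $i$-th entry $x_i$ to $(g^{-1} x_j^{\pm 1} g)\, x_i$; composing such moves (and using inversion to pass to right multiplication) shows one may replace $x_i$ by $r x_i$ or $x_i r$ for any $r$ in the normal closure of the remaining entries. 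When those remaining entries normally generate $G$, the $i$-th entry may therefore be reset to an arbitrary prescribed element of $G$.

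With these in hand, let $\xb$ be any normally generating $n$-vector of $G$. Its image $\pi(\xb)$ normally generates $\ov{G}$ and $n > w(\ov{G})$, so since $\ov{G}$ satisfies $\GACCw{1}$ by hypothesis, $\pi(\xb)$ is $\AC$-equivalent in $\ov{G}$ to $(\ov{u}_1, \dots, \ov{u}_{n-1}, 1)$. Lifting this $\AC$-transformation to $G$ (applying the same Nielsen moves and realizing each conjugation by an arbitrary lift of the conjugating element) yields $\mathbf{y} = (y_1, \dots, y_n)$ with $\xb$ $\AC$-equivalent to $\mathbf{y}$, where $\pi(y_i) = \ov{u}_i$ for $i < n$ and $y_n \in W$. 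I would then match $\mathbf{y}$ to $\mathbf{u}$ by a spare-coordinate procedure. At every stage the first $n-1$ entries will have $\pi$-image exactly $\{\ov{u}_1, \dots, \ov{u}_{n-1}\}$, so by the omissibility fact they normally generate $G$ and the last coordinate is freely resettable. To turn the $i$-th entry ($i < n$) into $u_i$, I first copy its current value into the spare $n$-th coordinate; the $n-1$ entries other than the $i$-th then still have $\pi$-image $\{\ov{u}_1, \dots, \ov{u}_{n-1}\}$, hence normally generate $G$, and the second auxiliary fact lets me reset the $i$-th entry to $u_i$. After processing $i = 1, \dots, n-1$ and finally resetting the spare coordinate to $1$, the vector has become $\mathbf{u}$. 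Thus $\xb$ is $\AC$-equivalent to $\mathbf{u}$ for every $\xb$, which is the desired conclusion.

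The lifting of $\AC$-transformations from $\ov{G}$ and the two auxiliary facts are routine. The step I expect to require the most care is the spare-coordinate manipulation: this is exactly where the hypothesis $n > w(G)$ enters (there must be a coordinate to spare), and one must verify that every intermediate vector still normally generates $G$. The omissibility fact is what makes this work, since the $\pi$-images of the working coordinates are kept equal to the fixed normally generating set $\{\ov{u}_1, \dots, \ov{u}_{n-1}\}$ throughout.
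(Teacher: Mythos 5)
Your proof is correct and follows essentially the same route as the paper's: reduce modulo $W(G)$ using the hypothesis on $G/W(G)$, lift the resulting $\AC$-equivalence to $G$, and then use a spare coordinate (available precisely because $n > w(G)$, and freely rewritable because normal generation is detected modulo $W(G)$) to adjust the remaining entries by $\M$-type transformations. The only differences are cosmetic: you normalize every vector to a single fixed target and prove the two auxiliary facts explicitly, whereas the paper compares two vectors directly and invokes these facts as recalled background.
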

We have already observed, thanks to Theorem \ref{ThNielsenAbel},  that $\GACC{1}$ implies 
$\GACCw{1}$, but the question as to whether the converse holds is open.
Proposition \ref{PropW} naturally leads to the following definition. Let $\W$ be the class of the groups $G$ such that $G/W(G)$ is finite, or Abelian, or of the form $S_1 \times \cdots \times S_k$ where each factor is non-Abelian and simple. Then we have the following

\begin{corollary} \label{CorTransitive}
Let $G$ be a finitely generated group in $\W$.
Then $G$ satisfies $\GACCw{1}$.
\end{corollary}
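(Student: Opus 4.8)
The plan is to reduce the problem to the structure of $G/W(G)$ via Proposition \ref{PropW} and then dispatch each of the three defining cases of the class $\W$ using results already collected in the introduction. Since the conclusion $\GACCw{1}$ only concerns the range $n > w(G)$, Proposition \ref{PropW} is exactly the leverage we need: it transfers $\GACCw{1}$ from $G/W(G)$ to $G$, so it suffices to verify that the quotient $G/W(G)$ satisfies $\GACCw{1}$.

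First I would record that $G/W(G)$ inherits finite generation from $G$, being a quotient of $G$; in the third case this forces each simple factor $S_i$ of $G/W(G) \cong S_1 \times \cdots \times S_k$ to be finitely generated as well, so that the relevant prior results genuinely apply. Having fixed this, the entire argument rests on verifying the hypothesis of Proposition \ref{PropW}, namely that $G/W(G)$ satisfies $\GACCw{1}$, in each of the three cases furnished by the definition of $\W$.

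Next I would treat the three cases in turn. If $G/W(G)$ is finite, then it satisfies $\GACC{1}$ by \cite[Theorem 1.1]{BLM05}; since $\GACC{1}$ implies $\GACCw{1}$ (as observed earlier by means of Theorem \ref{ThNielsenAbel}), the quotient satisfies $\GACCw{1}$. If $G/W(G)$ is Abelian, then it satisfies $\GACCw{1}$ directly by Theorem \ref{ThNielsenAbel}, with no detour through $\GACC{1}$ needed. Finally, if $G/W(G) \cong S_1 \times \cdots \times S_k$ with each $S_i$ non-Abelian and simple, then $G/W(G)$ satisfies $\GACC{1}$ by \cite[Inspection of Theorem 2.1's proof]{BLM05}, and hence again $\GACCw{1}$. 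In all three cases the hypothesis of Proposition \ref{PropW} is met, and that proposition then delivers $\GACCw{1}$ for $G$ itself.

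I do not anticipate a genuine obstacle here: the corollary is essentially a synthesis of Proposition \ref{PropW} with the three classes already shown to satisfy $\GACCw{1}$. The only points demanding care are the inheritance of finite generation by $G/W(G)$ (needed so that the cited results about finitely generated groups apply) and the correct invocation of the implication $\GACC{1} \Rightarrow \GACCw{1}$ in the finite and simple-product cases, both of which are routine; the Abelian case bypasses this implication and rests on Theorem \ref{ThNielsenAbel} alone.
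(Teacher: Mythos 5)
Your proposal is correct and follows essentially the same route as the paper: reduce to $H = G/W(G)$ via Proposition \ref{PropW}, then dispatch the Abelian case by Theorem \ref{ThNielsenAbel}, the finite case by \cite[Theorem 1.1]{BLM05}, and the simple-product case by the proof of \cite[Theorem 2.1]{BLM05}. Your added remarks on the inheritance of finite generation by the quotient and on the implication $\GACC{1} \Rightarrow \GACCw{1}$ (established earlier in the paper via Theorem \ref{ThNielsenAbel}) only make explicit what the paper leaves implicit.
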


 By construction, the class $\W$ contains all the groups which were shown to satisfy $\GACC{1}$. 
Let $\MN$ be the class of groups whose maximal subgroups are normal. 
The subclass of $\W$ which consists of the finitely generated groups $G$ such that $G/W(G)$ is Abelian naturally generalizes the class of finitely generated groups lying in $\MN$. 
Indeed $\MN$ coincides with the class of groups $G$ such that $G/\Phi(G)$ is Abelian \cite[Theorem A]{Myr15}.
Furthermore, the class $\W$ contains finitely generated group which don't sit in $\MN$ since a finitely generated soluble group in $\MN$ must be nilpotent. The class $\W$  also contains those Sunic groups of intermediate growth that don't belong to $\MN$ \cite{FG16}.

In the formulation of $\GACC{1}$, the case $n = w(G) = 1$ is purposely ignored. 
Indeed, the original $\AC$ conjecture is only concerned with $n \ge 2$ since $w(F_n) = n$ and it is trivial to address the case of $F_1$. Moreover, no non-trivial perfect group can satisfy the specialization to $n = 1$ of $\GACC{1}$ (see, e.g., \cite[Lemma 4.2]{WW78}) whereas all finite groups satisfy it when $n \ge 2$.  Still, understanding $\AC$-equivalence in the class of finitely generated weight one groups, which encompasses the long-studied class of $n$-knot groups, is a problem on its own (see, e.g., \cite{Plo83, SWW10}).
We call $g \in G$ a \emph{weight element of $G$} if $g$ normally generates $G$, i.e., $G = \langle g \rangle^G$.
We say that $G$ satisfies $\GACCone{1}$ if $w(G) = 1$ and if two weight elements of $G$ are $\AC$-equivalent whenever their images under abelianization are $\AC$-equivalent. Note that $\GACCone{1}$ is, by definition, independent of $\GACC{1}$.
In the class of finitely generated soluble groups, the following results can be established:
\begin{proposition} \label{PropNormalGenerator}
The following hold:
\begin{itemize}
\item[$(i)$]
 A finitely generated soluble group is normally generated by a single element if and only if its abelianization is cyclic. 
\item[$(ii)$]
A finitely generated metabelian group with cyclic abelianization satisfies $\GACCone{1}$.
\item[$(iii)$] There exists a three-generated polycyclic group of solubility length $3$ with infinitely many non-$\AC$-equivalent weight elements.
\item[$(iv)$]
The wreath product of finitely many finite cyclic groups with pairwise coprime orders satisfies $\GACCone{1}$.
\end{itemize}
\end{proposition}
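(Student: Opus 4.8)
The four assertions are largely independent, and all of them exploit the characterization that, for $G$ soluble, a single element normally generates $G$ exactly when its image generates $\Gab$. I sketch each part in turn.

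\emph{Part (i).} The forward implication is immediate: if $\ncl{g}=G$ then the image of $g$ generates $\Gab$, which is therefore cyclic. For the converse, suppose $\Gab$ is cyclic and choose $g$ whose image generates $\Gab$, so that $\ncl{g}\,\DG=G$. Put $N=\ncl{g}$. Then $G/N$ is generated by the image of $\DG$, hence coincides with its own commutator subgroup; being a soluble perfect group it is trivial, so $N=G$ and $g$ is a weight element.

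\emph{Part (ii).} By (i) we have $w(G)=1$. Write $M=\DG$, an abelian normal subgroup on which $\Gab$ acts, and let $g_0$ generate $\Gab$. The crux is the surjectivity $(1-g_0)M=M$: the submodule $(1-g_0)M$ is normal in $G$, and in $G/(1-g_0)M$ the image of $M$ is central (since $[a,m]\in(1-g_0)M$ for a lift $a$ of $g_0$), so this quotient is abelian and therefore $M=\DG\subseteq(1-g_0)M$. Since the commutator map $m\mapsto[a,m]$ is a homomorphism $M\to M$ with image $(1-g_0)M=M$, the conjugacy class of a weight element $a$ with $\bar a=g_0$ is the whole coset $aM$. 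Now if $a,b$ are weight elements with $\AC$-equivalent images, then $\bar b=\pm\bar a$; replacing $b$ by $b^{-1}$ if necessary (an $\AC$-move) I may assume $\bar b=\bar a$, whence $b\in aM$ is conjugate to $a$, so $a$ and $b$ are $\AC$-equivalent.

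\emph{Part (iii).} I would take $G=\Z\ltimes_\phi H$, where $H=\Pres{x,y,z}{[x,y]=z,\ [x,z]=[y,z]=1}$ is the integral Heisenberg group and a generator $a$ of $\Z$ acts by the automorphism $\phi(x)=x^2y$, $\phi(y)=xy$, $\phi(z)=z$, inducing $\left(\begin{smallmatrix}2&1\\1&1\end{smallmatrix}\right)\in SL_2(\Z)$ on $H_{ab}=\Z^2$. As $\det(\phi_{ab}-1)=-1$, the map $\phi-1$ is onto $H_{ab}$, which forces $\DG=H$ and $\Gab=\Z$; moreover $G^{(1)}=H$, $G^{(2)}=\langle z\rangle\cong\Z$, $G^{(3)}=1$, so $G$ is three-generated, polycyclic, of derived length $3$. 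The elements $az^n$ ($n\in\Z$) are weight elements by (i). Conjugacy of $az^n$ and $az^m$ reduces to the condition $z^{m-n}\in\{v^{-1}\phi(v):v\in H\}$; but $v^{-1}\phi(v)\in\langle z\rangle$ projects to $(\phi_{ab}-1)\bar v=0$, forcing $\bar v=0$ (as $\phi_{ab}-1$ is injective) and hence $v^{-1}\phi(v)=1$ since $\phi$ fixes $z$. Thus $az^n\sim az^m$ only for $n=m$, and since $\bar a$ has infinite order inversion cannot relate distinct ones, yielding infinitely many $\AC$-classes.

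\emph{Part (iv).} Write the product as $W_k=W_{k-1}\wr C_{n_k}$ with $W_0$ trivial; pairwise coprimality gives $\gcd(n_k,|W_{k-1}|)=1$, and $(A\wr C)_{ab}=A_{ab}\times C_{ab}$ shows each $W_j$ has cyclic abelianization, so $w=1$ by (i). I would prove by induction the lemma: if $A$ satisfies $\GACCone{1}$ and $\gcd(|A|,n)=1$, then $G=A\wr C_n$ does. A weight element of $G$ is $g=(b,\sigma)$ with $\sigma$ a generator of $C_n$ (so an $n$-cycle on the coordinates) and cycle product $P(b)=a_1\cdots a_n$ (read around $\sigma$) a weight element of $A$, with $\overline{P(b)}\in A_{ab}$ its abelianized image; the classical description of conjugacy in wreath products gives $(b,\sigma)\sim_G(b',\sigma)$ iff $P(b)\sim_A P(b')$, while $g^{-1}$ has top $\sigma^{-1}$ and cycle product $\sim_A P(b)^{-1}$, and the image of $g$ in $G_{ab}=A_{ab}\times C_n$ is $(\overline{P(b)},s)$. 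Given weight elements with $\AC$-equivalent images, after replacing $g'$ by $g'^{-1}$ if needed I may assume equal tops and $\overline{P(b)}=\overline{P(b')}$; the inductive hypothesis then yields $P(b)\sim_A P(b')^{\pm1}$, and the sign $-1$ would force $2\,\overline{P(b)}=0$ with $\overline{P(b)}$ generating $A_{ab}$, impossible unless $|A_{ab}|\le 2$ — which in the induction occurs only for $A\in\{1,C_2\}$, where every element equals its own inverse. Hence $P(b)\sim_A P(b')$ and $g\sim_G g'$.

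I expect the main obstacle to be Part (iv): justifying the conjugacy dictionary through the cycle product within $A\wr C_n$ (rather than $A\wr S_n$) and, above all, the sign bookkeeping, whose resolution hinges on coprimality forcing $|A_{ab}|\ge 3$ at every inductive step except the harmless base cases $A\in\{1,C_2\}$. Parts (i)--(iii) reduce to the single surjectivity fact $(1-g_0)\DG=\DG$ and an explicit twisted-conjugacy computation.
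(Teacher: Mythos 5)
Your proposal is correct, but it diverges from the paper in an interesting way on parts (iii) and (iv). Parts (i) and (ii) are in substance the paper's own arguments: for (i) the paper invokes Baer's result that $G/W(G)$ is Abelian (Lemma \ref{LemW}), while you give the equivalent direct argument that a soluble perfect quotient is trivial; for (ii) both proofs reduce to the surjectivity $\DG = \DG^{1-c}$, which the paper extracts from the explicit metabelian identity $\br{a^kd, a^l e} = e^{1 - c^k}d^{-(1 - c^l)}$ together with $G = \langle a \rangle \DG$, and which you obtain by observing that $M/(1-g_0)M$ is central in $G/(1-g_0)M$ --- same content, slightly different packaging. For (iii) the paper does not prove anything: it simply cites \cite[Theorem 9.1]{Hil11}. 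You instead build an explicit example $\Z \ltimes_\phi H$ with $H$ the integral Heisenberg group, and your verification is sound: $\det(\phi_{ab}-1)=-1$ makes $\phi_{ab}-1$ bijective on $\Z^2$, forcing $\DG = H$ and $\Gab \cong \Z$, and the injectivity of $\phi_{ab}^{\pm1}-1$ pins any twisted conjugator into $\langle z \rangle$, where $\phi$ acts trivially; inversion is ruled out because $\bar a$ has infinite order. (Your group is in fact two-generated, which still meets ``three-generated'' read as ``generated by three elements''.) For (iv) the two routes genuinely differ: the paper defines a recursive lift $x_k$ of $(c_1,\dots,c_k)$, proves by induction --- using \cite[Theorem 1.(5)]{KP70} on centralizers in wreath products --- that $x_k$ is self-centralizing of order $\vert (G_k)_{ab} \vert$, and concludes by counting that the conjugacy class of $x_k$ is the \emph{entire} coset $x_k \br{G_k, G_k}$; you instead run an induction through the cycle-product conjugacy criterion in $A \wr C_n$, which does hold with conjugators taken in $A \wr C_n$ rather than $A \wr S_n$ (the recursive choice $v_{i+1} = b_i^{-1} v_i b_i'$ closes up around the cycle precisely when $P(b') = w^{-1}P(b)w$). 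Your approach is self-contained where the paper leans on the Kaloujnine--Krasner-type centralizer result, at the cost of the sign bookkeeping; as you yourself flag, your inductive lemma stated for arbitrary $A$ has a loophole when $\vert A_{ab}\vert = 2$ with $A \not\cong C_2$, but this never arises here since an iterated wreath product of cyclic groups with $\vert A_{ab}\vert \le 2$ is trivial or $C_2$. In exchange, the paper's counting argument yields the stronger structural fact that \emph{all} lifts of a fixed generating image form a single conjugacy class, while your lemma isolates a reusable inductive step for $A \wr C_n$.
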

The assertion $(i)$ of Proposition \ref{PropNormalGenerator} is a straightforward corollary of Lemma \ref{LemW}, while assertions $(ii)$ and $(iii)$ are respectively \cite[Theorem 14.1.(4)]{Hil02} and \cite[Theorem 9.1]{Hil11}. Our contribution is assertion $(iv)$ which shows that $\GACCone{1}$ can be satisfied by groups with arbitrary solubility lengths.

Here are three examples of non-cyclic metabelian groups which satisfy $\GACCone{1}$:
\begin{itemize}
\item the dihedral group of order $2p$ with $p$ an odd integer;
\item the group of the invertible affine transformations $x \mapsto ax + b$ of a finite field;
\item  the metabelian Baumslag-Solitar group
$\BS(1, 2) = \langle a, b \,\vert\, aba^{-1} = b^2 \rangle$.
\end{itemize}

Because of statements like Proposition \ref{PropSoluble} and the analogous \cite[Corollary 1.3]{BLM05} for finite groups, it is sometimes asserted that a computation in a group which satisfies $\GACC{1}$ cannot lead to a counterexample to the classical $\AC$ conjecture. This is only true if one disregards strategies based on the complexity of the reduction. Indeed, if reducing the image of a potential counterexample in a group $G$ to the image of a fixed basis requires arbitrarily many $\AC$-moves when $G$ varies is some suitable class, then the $\AC$ conjecture is disproved. 
Complexity measures tightly related to the minimal number of required $\AC$-moves were studied in \cite{Bri15, Lis15}.  
A coarser complexity measure, termed \emph{recalcitrance}, was studied in \cite{BO12, BHKM99, BM93} with a view to disprove the classical $\AC$ conjecture using reductions in free soluble groups with arbitrary soluble lengths. 

We shall define the recalcitrance of a finitely generated group in the sense of \cite{BHKM99} and present then a new generalized Andrews-Curtis conjecture based on this definition.
In \cite[Proposition 1]{BM93} it was shown that the result of any finite sequence of successive $\AC$-transformations can be achieved by means of a sequence of transformations of the form
$$
(s_1, \dots, s_n) \rightarrow (s_1, \dots, s_iw, \dots, s_n), \,w \in \ncl{s_1, \dots, s_{i - 1}, s_{i + 1}, \dots, s_n}
$$
perhaps followed by a permutation of the $n$-vector, and conversely. Such a transformation is called an $\M$-transformation ($\M$ for ``modulo'' since $s_i$ is replaced by any element congruent to it modulo the other $s_j$). We take the $\M$-transformations as our ``elementary moves" and define the \emph{recalcitrance of a normally generating $n$-vector} of a group $G$ of rank $n$ to be the least number of $\M$-transformations needed to transition from that $n$-vector into a generating $n$-vector of $G$. If such a transition is impossible, the recalcitrance is infinite. 
The \emph{recalcitrance $\rec(G)$ of the group $G$} is defined as the supremum of the recalcitrances of all of its normally generating $n$-vectors. 
The classical $\AC$ conjecture can now be rephrased in asserting that every finitely generated free group has finite recalcitrance. This leads to
\begin{GACconjecture}[$\GACC{2}$]
A finitely generated group of rank $n$ is said to satisfy the generalized Andrews-Curtis conjecture in the sense of \cite{BM93} if every normally generating $n$-tuple of $G$ has finite recalcitrance.
\end{GACconjecture}

A group $G$ of rank $n$ has recalcitrance zero if every normally generating $n$-vector of $G$ is a generating $n$-vector. 
Finitely generated groups with recalcitrance zero 
have been characterized in the following ways
\begin{theorem}\cite[Theorem 1]{BHKM99}, \cite[Theorem A]{Myr15}.
Let $G$ be a finitely generated group.
Then the following are equivalent:
\begin{itemize}
\item[$(i)$] $\rec(G) = 0$.
\item[$(ii)$] The Frattini subgroup of $G$ contains the derived subgroup of $G$.
\item[$(iii)$] Every maximal subgroup of $G$ is normal.
\end{itemize}
\end{theorem}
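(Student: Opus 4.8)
The plan is to prove the theorem through the cycle $(ii)\Leftrightarrow(iii)$, $(ii)\Rightarrow(i)$ and $(i)\Rightarrow(iii)$, writing $n=\rk(G)$ throughout and using that a finitely generated group has maximal subgroups and that every proper subgroup lies in one. If $n\le 1$ then $G$ is cyclic, hence Abelian, and all three conditions hold trivially, so I may assume $n\ge 2$. The conceptual engine for the two directions involving $(i)$ is the reformulation: $\rec(G)=0$ if and only if no maximal subgroup contains a normally generating $n$-vector. Indeed $\rec(G)=0$ means every normally generating $n$-vector generates $G$, that is, lies in no maximal subgroup; and a vector lying in a \emph{normal} maximal subgroup $M$ cannot normally generate $G$, since then $\ncl{g_1,\dots,g_n}\subseteq M\neq G$. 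This already yields $(iii)\Rightarrow(i)$: if every maximal subgroup is normal, none can contain a normally generating $n$-vector, so $\rec(G)=0$.

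For $(ii)\Leftrightarrow(iii)$ I would use the classical observation that, $\Gab$ being Abelian, every subgroup containing $\DG$ is normal, while a normal maximal subgroup $M$ yields a quotient $G/M$ whose only subgroups are trivial, hence $G/M$ is cyclic of prime order and in particular Abelian, so $\DG\subseteq M$. Intersecting over all maximal subgroups then converts ``$\DG\subseteq\Phi(G)$'' into ``every maximal subgroup is normal'' and conversely (the convention $\Phi(G)=G$ in the absence of maximal subgroups being irrelevant here, as $G$ is finitely generated).

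For $(ii)\Rightarrow(i)$ I would invoke the non-generator property of the Frattini subgroup, valid for finitely generated groups: if $\langle S\rangle\,\Phi(G)=G$ then $\langle S\rangle=G$. Given a normally generating $n$-vector $(g_1,\dots,g_n)$, its image under $\pi_{ab}$ generates $\Gab$, whence $\langle g_1,\dots,g_n\rangle\,\DG=G$; since $\DG\subseteq\Phi(G)$ this gives $\langle g_1,\dots,g_n\rangle\,\Phi(G)=G$, and the non-generator property forces $\langle g_1,\dots,g_n\rangle=G$. Thus every normally generating $n$-vector generates $G$ and $\rec(G)=0$. Combined with $(ii)\Leftrightarrow(iii)$, this also re-proves $(iii)\Rightarrow(i)$.

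It remains to prove $(i)\Rightarrow(iii)$, which I would argue contrapositively: assuming a non-normal maximal subgroup exists, I will exhibit a normally generating $n$-vector lying in a proper subgroup, so that $\rec(G)>0$. I split on the weight $w(G)\le n$. If $w(G)<n$, take a normally generating $w(G)$-vector; it cannot generate $G$ (else $\rk(G)\le w(G)<n$), so it lies in a proper subgroup, and padding with repeated entries to length $n$ gives the witness. The hard case, and the main obstacle, is $w(G)=\rk(G)=n$, already present for $F_2$: here I must realize a \emph{minimal} normally generating $n$-vector inside a proper subgroup. Fixing a non-normal maximal subgroup $M$, we have $\DG\not\subseteq M$, hence $M\DG=G$ and $\pi_{ab}(M)=\Gab$, so $M$ normally generates $G$ yet of course does not generate it. The plan is to build the vector inside $M$ by selecting entries whose $\pi_{ab}$-images form a generating vector of $\Gab$ — always possible since $\pi_{ab}(M)=\Gab$ — while simultaneously arranging that their normal closure is all of $G$. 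The delicate point is exactly this second requirement: the abelianized condition only guarantees $\ncl{g_1,\dots,g_n}\,\DG=G$, and the residual gap in $\DG$ must be closed while the entries are confined to $M$, the mechanism being that conjugates of elements of $M$ escape $M$ (as $M$ is non-normal), so the normal closure can exhaust $G$ even though the chosen entries sit inside $M$. I expect to carry this out by a coordinate-by-coordinate choice controlling the successive quotients $G/\ncl{g_1,\dots,g_i}$, modelled on the explicit construction available for $F_2$, where after fixing the first entry a single further entry of suitable abelianized image completes the normal generation.
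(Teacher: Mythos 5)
Most of your scheme is sound, and it is worth noting that the paper itself offers no proof of this statement: it is quoted from \cite[Theorem 1]{BHKM99} and \cite[Theorem A]{Myr15}, so you are in effect reproving an external result. Your $(ii)\Leftrightarrow(iii)$ (subgroups containing $\DG$ are normal; a normal maximal subgroup has quotient of prime order, hence contains $\DG$; intersect), your $(ii)\Rightarrow(i)$ via the non-generator property of $\Phi(G)$ (valid here because in a finitely generated group every proper subgroup lies in a maximal one, and $\ncl{g_1,\dots,g_n}=G$ does imply $\langle g_1,\dots,g_n\rangle\DG=G$), the direct $(iii)\Rightarrow(i)$, and the padding argument for $(i)\Rightarrow(iii)$ when $w(G)<\rk(G)$ are all correct.

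The genuine gap is the case $w(G)=\rk(G)=n$ of $(i)\Rightarrow(iii)$, which is the entire mathematical content of the theorem beyond formalities, and where you only announce a ``coordinate-by-coordinate'' construction that you ``expect'' to carry out. The mechanism you invoke cannot close the gap as described: if $g_1,\dots,g_n\in M$ are chosen with $\pi_{ab}$-images generating $\Gab$, then $N=\ncl{g_1,\dots,g_n}$ satisfies only $N\DG=G$, so $G/N$ is a \emph{perfect} group (it is generated by the images of the correction terms lying in $\DG$), and the observation that conjugates of elements of $M$ escape $M$ gives no leverage against a nontrivial perfect quotient. Iterating modulo the derived series, $G=N\DG=NG''=\cdots$, forces $N=G$ exactly when $G$ is soluble --- this is precisely the device behind the paper's Lemma \ref{LemNormallyCoessential}$(i)$ via Lemma \ref{LemW} --- but the theorem is asserted for \emph{all} finitely generated groups, where nontrivial perfect quotients abound; for $G=F_2$ your plan would have to produce, inside an arbitrarily given non-normal maximal subgroup $M$, a pair with full normal closure, and abelianized bookkeeping alone does not do this (the known witnesses, such as $(x,\,x^y y)$, are exhibited by explicit word-level arguments, not by your heuristic). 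You also saddle yourself with a stronger task than needed: the theorem requires the witness tuple in \emph{some} proper subgroup, whereas you fix the given $M$ and never show that this particular $M$ contains a normally generating $n$-tuple. A correct proof must contain some device that handles perfect quotients --- for instance, exploiting that any tuple of conjugates $(x_1^{a_1},\dots,x_n^{a_n})$ of a generating tuple automatically normally generates, since $\ncl{x^a}=\ncl{x}$, which reduces the problem to conjugating a generating tuple into a common proper subgroup --- and your sketch contains no such device, so the implication $(i)\Rightarrow(iii)$ remains unproven.
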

In particular, finitely generated nilpotent groups have recalcitrance zero while finitely generated soluble and linear groups with zero recalcitrance must be nilpotent (see \cite{BHKM99, Myr15}). As noted in \cite{Myr15}, the first Grigorchuk group and the Gupta-Sidki $p$-groups are examples of non-linear groups with recalcitrance zero. 
The infinite dihedral group is an example of group with recalcitrance one \cite[Examples 3.1]{BHKM99} .
By \cite[Theorem 2.1]{BO12}, the recalcitrance of a finite group of rank $n$ is bounded by $2n - 1$, provided that $n$ is greater than the abelianization rank. 
By \cite[Theorem 3.1]{BO12}, the recalcitrance of a soluble group $G$ of rank $n$ is bounded by $2n - 1$ if its abelianization homomorphism is coessential. 
The authors ask in \cite[Open Problem]{BO12} whether the latter condition is necessary. We answer this question in the positive and exhibit subsequently a two-generated metabelian group with infinite recalcitrance. 

\begin{theorem}[Lemma \ref{LemNormallyCoessential} and Theorem 3.1 of \cite{BO12}] \label{ThRecalcitranceSoluble}
Let $G$ be a finitely generated soluble group of rank $n$. Then $G$ satisfies $\GACC{2}$ if and only if its abelianization homomorphism is coessential. If $G$ satisfies $\GACC{2}$ then we have, furthermore, $\rec(G) \le 2n - 1$.
\end{theorem}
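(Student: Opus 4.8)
The plan is to treat the three assertions separately, observing that both the backward implication and the bound are already contained in \cite[Theorem 3.1]{BO12}, so that the genuinely new work lies in the forward implication, which is the content of Lemma \ref{LemNormallyCoessential}. Indeed, if $\pi_{ab} \colon G \to \Gab$ is coessential, then \cite[Theorem 3.1]{BO12} gives $\rec(G) \le 2n - 1 < \infty$; this simultaneously shows that $G$ satisfies $\GACC{2}$ and furnishes the stated bound. It therefore remains to prove that $\GACC{2}$ forces $\pi_{ab}$ to be coessential, which I would establish in contrapositive form: if $\pi_{ab}$ is not coessential, then some normally generating $n$-vector of $G$ has infinite recalcitrance.

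First I would record two facts about the interaction between $\M$-transformations of $G^n$ and abelianization. Since abelianization carries $\ncl{s_1, \dots, \widehat{s_i}, \dots, s_n}$ onto the subgroup of $\Gab$ generated by the $\overline{s_j}$ with $j \neq i$, an $\M$-transformation of $G^n$ projects to a transformation of $\Gab^n$ that replaces $\overline{s_i}$ by $\overline{s_i} + \sum_{j \neq i} c_j \overline{s_j}$ (up to a permutation) for suitable integers $c_j$; such a move is invertible and preserves the subgroup generated by the coordinates. Moreover each such move lifts to a Nielsen transformation of $G^n$ (replace $s_i$ by $s_i \prod_{j \neq i} s_j^{c_j}$), which preserves $\langle s_1, \dots, s_n\rangle$; hence the property ``$\overline{\gb}$ lifts to a generating vector of $G$'' depends only on the orbit of the generating vector $\overline{\gb}$ of $\Gab$ under these moves. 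Second, since $G$ is soluble, any lift $\gb$ of a generating vector of $\Gab$ normally generates $G$: the normal closure $N = \ncl{g_1, \dots, g_n}$ satisfies $N\DG = G$, so $G/N = [G/N, G/N]$ is both perfect and soluble, hence trivial.

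With these in hand the forward implication runs as follows. Assume $\pi_{ab}$ is not coessential, so there is an $m \ge n$ and a generating $m$-vector of $\Gab$ admitting no generating lift to $G$. Because $\rk(\Gab) \le \rk(G) = n$, every level $j$ with $n < j \le m$ exceeds $\rk(\Gab)$, and I would push the obstruction down to level $n$: for such $j$ the kernel of $\Z^{j} \to \Gab$ contains a primitive vector (its smallest Smith invariant is a unit, since the kernel has dimension at least $j - \rk(\Gab) \ge 1$ modulo every prime $p$), so a Nielsen transformation clears one coordinate, and by the orbit-invariance above the resulting generating $(j-1)$-vector still has no generating lift. Iterating yields a generating $n$-vector $\overline{\gb}$ of $\Gab$ with no generating lift. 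Any lift $\gb$ of $\overline{\gb}$ is then a normally generating $n$-vector of $G$ by the solubility argument, and it has infinite recalcitrance: an $\M$-sequence from $\gb$ to a generating vector $\gb^*$ would project to a sequence of the above moves carrying $\overline{\gb}$ to $\overline{\gb^*}$, and the generating lift $\gb^*$ of $\overline{\gb^*}$ would exhibit, via orbit-invariance, a generating lift of $\overline{\gb}$, a contradiction. Hence $G$ does not satisfy $\GACC{2}$.

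The main obstacle is precisely this reconciliation of levels: recalcitrance is measured only at the rank level $n = \rk(G)$, whereas coessentiality quantifies over all levels $m \ge \rk(G)$. Solubility enters in two essential ways here, namely to guarantee $\rk(\Gab) \le n$ so that the coordinate-clearing step remains available above level $n$, and to ensure that lifts of generating vectors of $\Gab$ are genuinely normally generating and hence admissible inputs for the recalcitrance. The remaining verifications—that the displayed moves lift to Nielsen transformations and that Smith normal form produces a primitive kernel vector—are routine.
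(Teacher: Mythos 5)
Your proposal is correct, and its skeleton is the same as the paper's: the backward implication and the bound $\rec(G) \le 2n-1$ are quoted from \cite[Theorem 3.1]{BO12}; solubility enters exactly as in Lemma \ref{LemNormallyCoessential}$(i)$ (a lift of a generating vector of $\Gab$ normally generates $G$, since a perfect soluble quotient is trivial); and your transport mechanism --- an $\M$-transformation abelianizes to a move $\overline{s_i} \mapsto \overline{s_i} + \sum_{j \ne i} c_j \overline{s_j}$ realized by a Nielsen transformation that lifts to $G^k$ and preserves generation, so that ``admits a generating lift'' is constant along the abelianized sequence --- is precisely how the paper proves Lemma \ref{LemNormallyCoessential}$(ii)$ (``there is a Nielsen transformation which takes $\ogb$ to $\pi_{ab}(\hb)$; the inverse transformation takes $\hb$ to a generating lift of $\ogb$''). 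The one genuine point of divergence is your handling of the level mismatch. The paper argues directly at every level $k \ge n$: it lifts a generating $k$-vector of $\Gab$ to a normally generating $k$-vector of $G$ and asserts it can be transitioned to a generating $k$-vector by finitely many $\M$-transformations, even though the hypothesis of Lemma \ref{LemNormallyCoessential}$(ii)$ (and the definition of $\GACC{2}$) constrains only $n$-vectors with $n = \rk(G)$; closing that tacit step requires exactly the sort of normalization you supply (Nielsen-reduce the abelianized vector via Theorem \ref{ThNielsenAbel} so the last $k-n$ coordinates vanish, apply the hypothesis to the first $n$ components, and note that level-$n$ $\M$-moves remain $\M$-moves at level $k$). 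You instead argue contrapositively, pushing a failure of coessentiality at level $m > n$ down to level $n$ by your primitive-kernel-vector/Smith normal form step (Theorem \ref{ThNielsenAbel} in disguise, legitimate since $j > n \ge \rk(\Gab)$), and observing that truncating a cleared coordinate preserves non-liftability (append the identity to any putative generating lift). So your route is essentially the paper's, but it makes explicit --- and correctly resolves --- a reduction between levels that the paper's proof of Lemma \ref{LemNormallyCoessential}$(ii)$ leaves implicit; this is a point in your write-up's favor rather than a defect.
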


Using results of \cite{Guy16b}, we are able to prove with Proposition \ref{PropCoessential} below that the abelianization homomorphism of the
Baumslag-Solitar group $\BS(1, 11) = \Pres{a, b}{aba^{-1} = b^{11}}$ is not coessential.

\begin{corollary}[Proposition \ref{PropCoessential} and Corollary \ref{CorBS1N}] \label{CorBS}
 The metabelian Baumslag-Solitar group $\BS(1, 11)$ has some normally generating pairs of infinite recalcitrance. In particular, this group does not satisfy $\GACC{2}$.
\end{corollary}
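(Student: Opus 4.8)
The plan is to realize the statement as a direct assembly of Proposition~\ref{PropCoessential}, Corollary~\ref{CorBS1N} and the recalcitrance criterion of Theorem~\ref{ThRecalcitranceSoluble}. First I would record the structural facts about $G = \BS(1, 11)$ that make these results applicable. The group is metabelian: its derived subgroup is the abelian normal closure of $b$, isomorphic to $\Z[1/11]$, while the quotient is the cyclic group generated by the image of $a$; hence $G$ is a finitely generated soluble group. Moreover $G$ is non-Abelian (since $b$ has infinite order, $aba^{-1} = b^{11} \neq b$), so it is not cyclic and $\rk(G) = 2$. One checks in passing that $\Gab \cong \Z \times \Z/10\Z$, which is not cyclic and is indeed $2$-generated, consistent with $\rk(G) = 2$.

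With $n = \rk(G) = 2$ in hand, the argument runs as follows. Proposition~\ref{PropCoessential} asserts that the abelianization homomorphism $\pi_{ab} \colon G \twoheadrightarrow \Gab$ is \emph{not} coessential. Theorem~\ref{ThRecalcitranceSoluble} states that a finitely generated soluble group of rank $n$ satisfies $\GACC{2}$ if and only if its abelianization homomorphism is coessential. Reading this equivalence in its contrapositive (the ``only if'') direction, the failure of coessentiality forces $G$ to fail $\GACC{2}$. By the very definition of $\GACC{2}$, a finitely generated group of rank $n$ fails the conjecture exactly when some normally generating $n$-tuple does not have finite recalcitrance; applied to $G$ with $n = 2$, this produces a normally generating pair whose recalcitrance is infinite, i.e.\ no finite sequence of $\M$-transformations carries it to a generating pair of $G$. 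Corollary~\ref{CorBS1N} packages this deduction for the family $\BS(1, N)$, and the present statement is its specialization to $N = 11$.

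It is worth emphasizing that the genuine mathematical content sits entirely inside the two ingredients being invoked, not in the corollary itself, whose proof is a short syllogism. The first hard input is Proposition~\ref{PropCoessential}: showing that the abelianization of $\BS(1, 11)$ is not coessential requires exhibiting a generating pair of $\Gab \cong \Z \times \Z/10\Z$ all of whose lifts fail to generate $G$, and this is precisely where the results of \cite{Guy16b} on $\BS(1, N)$ enter, through the arithmetic of $\Z[1/11]$ and its unit group. The second is the ``only if'' half of Theorem~\ref{ThRecalcitranceSoluble}, namely Lemma~\ref{LemNormallyCoessential}, which converts such a non-liftable generating vector of the abelianization into a normally generating vector of $G$ beyond the reach of any finite sequence of $\M$-transformations. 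I expect the main obstacle to lie in the first of these, the non-coessentiality of $\pi_{ab}$, since it is the liftability of generating pairs of the abelianization that is delicate; granting both results as stated, the corollary follows immediately.
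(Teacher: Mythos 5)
Your proof is correct and takes essentially the same route as the paper: the corollary is exactly the assembly of the non-coessentiality of $\pi_{ab}$ for $\BS(1,11)$ (the units computation, where $\Z[1/11]^{\times}$ is generated by $-1$ and $11$ and hence maps onto the subgroup $\{\pm 1\}$ of $(\Z/10\Z)^{\times}$, missing $3$ and $7$) with the ``only if'' direction of Theorem \ref{ThRecalcitranceSoluble}, i.e.\ Lemma \ref{LemNormallyCoessential}, and Corollary \ref{CorBS1N} specialized to $n = 11 = 11^{1}$. One small attribution caveat, inherited from the paper's own introduction and harmless to your argument: Proposition \ref{PropCoessential} as actually stated in Section \ref{GACC2} lists cases where the unit map \emph{is} surjective (so coessentiality holds), and the non-coessentiality for $\BS(1,11)$ is really established inside the proof of Corollary \ref{CorBS1N} via Proposition \ref{PropSurjection}.
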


Eventually, we present a graph which encodes the complexity of the reduction of normally generating vectors via $\M$-transformations.
For $n \ge w(G)$, we denote by \emph{$\M$-graph $\M_n(G)$ of $G$}, that is the graph whose vertices are the normally generating $n$-vectors of $G$ and such that two vertices are connected by an edge if an $\M$-transformation, or the inversion of one component, turns one vertex into the other.
Note that swapping two components of an $n$-vector can be carried out using three elementary Nielsen transformations followed by the inversion of a component. Therefore, replacing inversions by swaps in the definition of $\M_n(G)$ would result in a quasi-isometric graph with the same connected components.
We denote by $\diam(\Gamma)$ the diameter, possibly infinite, of a connected graph $\Gamma$. We set 
$$d_n(G) = \sup_{\Cx} \diam(\Cx)$$ where $\Cx$ ranges over the set of connected components of $\M_n(G)$.

Thanks to \cite[Theorem 3.1]{BO12} and to a careful inspection of the proofs of Proposition \ref{PropW} and 
Theorem \ref{ThGACSoluble}, we obtain

\begin{corollary}
Let $G$ be a finitely generated group in $\W$, e.g., $G$ is a finitely generated soluble group.
Then the following hold.
\begin{enumerate}
\item If $n > w(G)$, then $\M_n(G)$ is connected and we have: 
$$ \diam(\M_n(\Gab)) \le \diam(\M_n(G)) \le \diam(\M_n(\GW)) + n + w(G).$$

\item If $n = w(G) > 1$ and $G$ is moreover soluble, then the following hold.
\begin{itemize}
\item[$(i)$] The abelianization homomorphism $G \twoheadrightarrow \Gab = G/\DG$ induces bijection between the sets of connected components of  $\M_n(G)$ and $\M_n(\Gab)$.
\item[$(ii)$] We have $d_n(\Gab) \le d_n(G) \le 2 d_n(\Gab) + 8(n - 1)$.
\item[$(iii)$] \cite[Theorem 3.1]{BO12} If the abelianization homomorphism of $G$ is coessential, then a normally generating $n$-vector is a t distance at most $2n -1$ from a generating $n$-vector.
\end{itemize}
\end{enumerate}
\end{corollary}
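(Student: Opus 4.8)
The plan is to treat both parts by pushing everything through the quotient morphisms $G \twoheadrightarrow \Gab$ and $G \twoheadrightarrow \GW$ and re-reading the qualitative arguments of Proposition \ref{PropW} and Theorem \ref{ThGACSoluble} while counting $\M$-moves. The basic tool I would set up first is that for any quotient $q \colon G \twoheadrightarrow Q$ the assignment $\gb \mapsto q(\gb)$ carries normally generating $n$-vectors to normally generating $n$-vectors and carries each $\M$-transformation (resp.\ each inversion of a component) either to an $\M$-transformation (resp.\ inversion) or to the identity; hence it is a distance-nonincreasing graph morphism $\M_n(G) \to \M_n(Q)$, which I apply to $q = \pi_{ab}$ and to $q = \pi_W \colon G \twoheadrightarrow \GW$. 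The connectivity of $\M_n(G)$ for $n > w(G)$ is then immediate: by Corollary \ref{CorTransitive} the group $G$ satisfies $\GACCw{1}$, so any two normally generating $n$-vectors are $\AC$-equivalent, which by \cite[Proposition 1]{BM93} is exactly the assertion that they lie in a single connected component of $\M_n(G)$.

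For the lower bound $\diam(\M_n(\Gab)) \le \diam(\M_n(G))$ in (1), I would show that the morphism induced by $\pi_{ab}$ is surjective on vertices. A generating $n$-vector $\ogb$ of the abelian group $\Gab$ lifts componentwise to some $\gb \in G^n$, and the normal closure $N = \ncl{\gb}$ then satisfies $N \DG = G$, so $G/N$ is a perfect quotient of $G$; the defining structure of the class $\W$ together with $n > w(G)$ forces $G/N = 1$, so that $\gb$ normally generates $G$ and maps onto $\ogb$. For $G$ soluble this is immediate, since a perfect soluble group is trivial. A distance-nonincreasing surjection of connected graphs cannot increase the diameter, which yields the inequality. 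The upper bound $\diam(\M_n(G)) \le \diam(\M_n(\GW)) + n + w(G)$ is the quantitative form of Proposition \ref{PropW}: given normally generating $n$-vectors $\gb, \hb$ of $G$, I would first connect their images in $\GW$ by a geodesic of $\M_n(\GW)$ and lift that path move-by-move to reach a vector $\gb'$ with $\pi_W(\gb') = \pi_W(\hb)$, at cost $\le \diam(\M_n(\GW))$; the residual discrepancy lies in $W(G)$, and the omissibility of $W(G)$-elements from normal generating sets lets one absorb it in at most $n + w(G)$ further $\M$-moves, exactly as in the proof of Proposition \ref{PropW}.

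For (2) I take $n = w(G) > 1$ with $G$ soluble. Assertion (i) is the component-level reformulation of Theorem \ref{ThGACSoluble}: the map induced on connected components of $\M_n(G)$ and $\M_n(\Gab)$ is well defined (components map into components), injective because $\AC$-equivalence upstairs is equivalent to $\AC$-equivalence of the abelianized vectors, and surjective because, as in the previous paragraph, every generating $n$-vector of $\Gab$ lifts to a normally generating $n$-vector of $G$; hence it is a bijection. The lower bound $d_n(\Gab) \le d_n(G)$ of (ii) follows from this: the bijection on components together with vertex-surjectivity shows that each component $\Cx$ of $\M_n(G)$ maps \emph{onto} the whole corresponding component of $\M_n(\Gab)$, so that component has diameter at most $\diam(\Cx) \le d_n(G)$, and taking the supremum gives the claim. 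Assertion (iii) is a direct citation of \cite[Theorem 3.1]{BO12}, which bounds the recalcitrance of a coessential soluble group of rank $n$ by $2n-1$, placing every normally generating $n$-vector within $\M$-distance $2n-1$ of a generating one.

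The main obstacle is the upper bound $d_n(G) \le 2 d_n(\Gab) + 8(n-1)$ of (2)(ii), which requires auditing the proof of Theorem \ref{ThGACSoluble} (itself modelled on \cite[Theorem 3.1]{BO12}) and converting each qualitative reduction step into an explicit count of $\M$-moves. Given normally generating $n$-vectors $\gb, \hb$ in one component of $\M_n(G)$, I would first lift a geodesic of $\M_n(\Gab)$ joining $\pi_{ab}(\gb)$ to $\pi_{ab}(\hb)$, paying $\le d_n(\Gab)$ moves and arriving at a vector congruent to $\hb$ modulo $\DG$; the remaining work then takes place inside $\DG$ and is handled by the module-theoretic reduction over the abelianized group ring underlying the soluble case. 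The content of the bound is that this second stage costs at most $d_n(\Gab) + 8(n-1)$ moves, the linear term $8(n-1)$ recording the fixed number of Nielsen- and conjugation-type corrections needed to clear the $\DG$-part. Establishing this precise constant, rather than a mere finiteness statement, is the delicate point and the step I expect to demand the most careful bookkeeping.
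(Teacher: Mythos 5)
Your overall route --- push everything through $\pi_{ab}$ and $\pi_W$, get connectivity from Corollary \ref{CorTransitive} together with \cite[Proposition 1]{BM93}, and extract the constants by re-running the proofs of Proposition \ref{PropW} and Theorem \ref{ThGACSoluble} with a move count --- is exactly the paper's, which states this corollary as the outcome of precisely such an inspection. But your proposal has a genuine gap at its quantitative heart: the bound $d_n(G) \le 2 d_n(\Gab) + 8(n-1)$, the only nontrivial content of $(2)(ii)$, is deferred rather than proved, and the decomposition you sketch would not produce it. You spend $d_n(\Gab)$ moves reaching a vector congruent to $\hb$ modulo $\DG$ and then posit a second stage of cost $d_n(\Gab) + 8(n-1)$ ``inside $\DG$'', handled by a ``module-theoretic reduction over the abelianized group ring''. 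That is not the mechanism: clearing the $\DG$-part rests on the weighted-commutator Lemma \ref{LemNormalClosureN} (module-theoretic arguments appear in the paper only for weight-one metabelian groups), and it applies only after a vector has been brought to the special shape $(a_1c_1, \dots, a_{n-1}c_{n-1}, b^k d)$ relative to a generating vector supplied by Theorem \ref{ThNielsenAbel}. The count is symmetric, not asymmetric: each of $\gb$ and $\hb$ is carried by at most $d_n(\Gab)$ lifted Nielsen moves to that shape (this is where the factor $2$ arises), and each such vector is then reduced to the common vector $(a_1, \dots, a_{n-1}, b^k)$ by at most $n-1$ substitution $\M$-moves, two moves per index in the iteration that successively produces the components $a_i b^{-k}$, and a final batch of $n-1$ Nielsen moves --- roughly $4(n-1)$ moves per vector, whence $8(n-1)$. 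This bookkeeping \emph{is} the corollary; omitting it leaves $(2)(ii)$ unproven.

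Two further points fail as written. First, your surjectivity argument in part (1) is wrong outside the soluble case: from $N\DG = G$ you conclude that the perfect quotient $G/N$ is trivial because $G \in \W$ and $n > w(G)$, but $\W$ contains all finite groups and all products of non-Abelian simple groups, which have nontrivial perfect quotients; concretely, for $G = A_5 \in \W$ and $n = 2 > w(G) = 1$, the componentwise lift $(1,1)$ of the unique generating pair of $\Gab = 1$ has trivial normal closure. Surjectivity of $\M_n(G) \to \M_n(\Gab)$ still holds, but the lift must be adjusted within its $\DG$-coset, testing normal generation in $G/W(G)$; for soluble $G$ the naive lift does work --- this is Lemma \ref{LemNormallyCoessential}.$(i)$ --- which suffices for part (2) but not for part (1) as claimed over all of $\W$. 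Second, your endgame for the upper bound in (1) --- lift a geodesic, then absorb the $W(G)$-discrepancy in $n + w(G)$ moves ``exactly as in Proposition \ref{PropW}'' --- glosses over the fact that the register trick there requires the components of index $> w(G)$ to lie in $W(G)$, so that the truncated vector still normally generates and the component of index $w(G)+1$ may be freely overwritten. After lifting an arbitrary geodesic this normalization need not hold: for $n > w(G)$ a normally generating $n$-vector may have no normally generating $(n-1)$-subvector (already for $G = \Z/6\Z \times \Z/6\Z$ with $\gb = ((2,0),(3,0),(0,1))$), so the path must be routed through a normalized vector as in Proposition \ref{PropW}'s proof before the $n + w(G)$ count becomes available.
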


If $G$ is a finitely generated Abelian group, then $\M_n(G)$ is the Nielsen graph $\Gamma_n(G)$ depicted in \cite{Myr16}, also known as the \emph{extended product replacement graph $\tilde{\Gamma}_n(G)$} \cite[Section 2.2]{Pak01}. 
In this case, the connected components of $\M_n(G)$ can be described using the following 
\begin{theorem} \label{ThNielsenAbel} \cite[Theorem 1.1]{Oan11}
Let $G$ be a finitely generated Abelian group whose invariant factor decomposition is
$$
\Z_{d_1} \times \cdots  \times \Z_{d_k}
$$
with $1 \neq d_1$, $d_i$ divides $d_{i + 1}$ and where $\Z_{d_i}$ stands for $\Z/d_i\Z$ with $d_i \ge 0$
(in particular $\Z_0 = \Z$).
Then every generating $n$-vector $\gb$ with $n \ge k$ is
Nielsen equivalent to
$(\delta e_1, e_2, \dots, e_k, 0, \dots, 0)$ for some $\delta \in (\Z_{d_1})^{\times}$ 
and where $e_i \in G$ is defined by $(e_i)_i = 1 \in \Z_{d_i}$ and $(e_i)_j = 0$ for $j \neq i$. 

\begin{itemize}
\item If $n > k$, then we can take $\delta = 1$.
\item If $n = k$ then $\delta$ must be $\pm \det(\gb)$ where $\det(\gb)$ is the determinant of the matrix whose rows are the images of 
the components of $\gb$ under the natural epimorphism $G \twoheadrightarrow (\Z_{d_1})^n$. 
\end{itemize}
In particular $G$ has only one Nielsen equivalence class of generating $n$-vectors for $n > k$ while it has $\max(\varphi(d_1)/2, 1)$ Nielsen equivalence classes of generating $k$-vectors where $\varphi$ denotes the Euler totient function extended by $\varphi(0) = 2$.
\end{theorem}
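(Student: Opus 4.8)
The plan is to recast Nielsen equivalence of generating vectors as a linear-algebra orbit problem and then classify the orbits by Smith normal form together with a determinant invariant. First I would identify a generating $n$-vector $\gb = (g_1, \dots, g_n)$ with the epimorphism $\phi_{\gb}\colon \Z^n \twoheadrightarrow G$ sending the $i$-th standard basis vector to $g_i$. Since $G$ is Abelian, an elementary Nielsen transformation replaces $\gb$ by $\gb \circ \alpha$ for a transvection $\alpha = I + E_{ji}$ or a sign change $\alpha = \diag(1, \dots, -1, \dots, 1)$, and these generate $\operatorname{GL}_n(\Z)$; hence two generating vectors are Nielsen equivalent if and only if the corresponding epimorphisms differ by precomposition with an element of $\operatorname{GL}_n(\Z)$. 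Writing $G = \bigoplus_{j} \Z_{d_j}$ and representing $\phi_{\gb}$ by a $k \times n$ integer matrix $A$ (with row $j$ read modulo $d_j$), Nielsen equivalence becomes the right action of $\operatorname{GL}_n(\Z)$ on the columns of $A$, and $\gb$ generates $G$ exactly when the Smith normal form of $[A \mid \diag(d_1, \dots, d_k)]$ is $[I_k \mid 0]$.

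Second (existence of the normal form), I would column-reduce $A$. Using integer column operations and surjectivity of $\phi_{\gb}$, one brings $A$ to the diagonal shape $g_i = \delta_i e_i$ for $1 \le i \le k$ and $g_i = 0$ for $i > k$, with each $\delta_i$ a unit modulo $d_i$. The decisive structural step is to collapse these units into a single one: exploiting the divisibility $d_i \mid d_{i+1}$, a transvection between the slots $i$ and $i+1$ transfers a unit factor from $e_{i+1}$ down to $e_i$, so one successively forces $\delta_2 = \cdots = \delta_k = 1$ and pushes all the remaining ambiguity into $\delta_1 =: \delta \in (\Z_{d_1})^{\times}$. This yields the claimed normal form $(\delta e_1, e_2, \dots, e_k, 0, \dots, 0)$.

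Third, to pin down $\delta$ and count classes when $n = k$, I would introduce the determinant invariant. Reducing every coordinate modulo $d_1$ (legitimate because $d_1 \mid d_j$) gives an epimorphism $G \twoheadrightarrow (\Z_{d_1})^k$ under which a generating $k$-vector maps to a generating, hence basis, $k$-vector of the free $\Z_{d_1}$-module $(\Z_{d_1})^k$; its determinant $\det(\gb) \in (\Z_{d_1})^{\times}$ is therefore defined. Transvections preserve it and a sign change negates it, so $\det(\gb)$ is a Nielsen invariant valued in $(\Z_{d_1})^{\times}/\{\pm 1\}$. Since the normal form has determinant $\delta$, we obtain $\delta = \pm \det(\gb)$; the invariant is complete, because equal values of $\pm\det$ force the same normal form up to the sign change on the first component; and the number of classes equals $|(\Z_{d_1})^{\times}/\{\pm 1\}| = \max(\varphi(d_1)/2, 1)$, the maximum accounting for the fixed point of $\delta \mapsto -\delta$ when $d_1 \in \{0, 2\}$.

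Fourth, for $n > k$ I would use a trailing zero component to eliminate $\delta$ entirely. Acting on the pair $(g_1, g_{k+1}) = (\delta e_1, 0)$ by a lift in $\operatorname{SL}_2(\Z)$ of $\diag(\delta^{-1}, \delta) \in \operatorname{SL}_2(\Z_{d_1})$—such a lift exists since $\operatorname{SL}_2(\Z) \to \operatorname{SL}_2(\Z_{d_1})$ is surjective—sends $g_1$ to $e_1$ while keeping $g_{k+1} = 0$, because the off-diagonal entries of the lift reduce to $0$ modulo $d_1$ and $e_1$ has order $d_1$. Thus $\delta$ can be taken to be $1$ and there is a single class. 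I expect the main obstacle to be the second step: the column reduction to diagonal form and, above all, the collapsing of the diagonal units $\delta_i$ into a single unit $\delta$ on the first slot, where the divisibility chain $d_1 \mid \cdots \mid d_k$ of the invariant factors must be used in its full strength.
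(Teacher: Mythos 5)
First, a framing remark: the paper does not prove this statement at all --- it is quoted verbatim from Oancea (\cite[Theorem 1.1]{Oan11}) --- so your proposal can only be measured against the cited source, whose proof is likewise a matrix-reduction argument. Within your proposal, steps 1, 3 and 4 are essentially sound: the identification of Nielsen equivalence with the right $\operatorname{GL}_n(\Z)$-action is correct, the determinant in $(\Z_{d_1})^{\times}/\{\pm 1\}$ is a well-defined complete invariant once the normal form exists, and the $\operatorname{SL}_2(\Z) \to \operatorname{SL}_2(\Z_{d_1})$ lifting trick for $n > k$ works as you describe. Moreover, the unit-collapsing move you flag as the chief worry is actually routine: starting from columns $c_i = u e_i$, $c_{i+1} = v e_{i+1}$, the four operations $c_i \mathrel{+}= c_{i+1}$, then $c_{i+1} \mathrel{+}= (v^{\ast} - 1)c_i$ (where $v v^{\ast} \equiv 1 \bmod d_{i+1}$), then $c_i \mathrel{-}= v\, c_{i+1}$, then subtracting a suitable multiple of $c_i$ from $c_{i+1}$, yield $(uv\, e_i, e_{i+1})$; the divisibility $d_i \mid d_{i+1}$ enters only to guarantee $v v^{\ast} \equiv 1 \bmod d_i$.

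The genuine gap is the step you dispose of in one sentence: that integer \emph{column} operations alone bring $A$ to diagonal shape with unit entries. This is the real content of the theorem, and it is not a formality: you have no row operations available (those would change the reference elements $e_1, \dots, e_k$, i.e., amount to applying automorphisms of $G$), and the naive induction --- clear the first row to $(g, 0, \dots, 0)$, then clear the rest of the first column using the remaining columns --- fails. Surjectivity only gives $\langle d_1 b \rangle + \langle B_1, \dots, B_{n-1} \rangle = \Z_{d_2} \times \cdots \times \Z_{d_k}$, where $b$ is the lower part of the first column and the $B_i$ are the lower parts of the others; $b$ need not lie in $\langle B_1, \dots, B_{n-1}\rangle$. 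Concretely, in $G = \Z_2 \times \Z_6$ the pair with columns $(1,1)^{T}$ and $(0,3)^{T}$ generates $G$ and has reduced first row, yet the entry $1$ below the pivot cannot be cleared by adding multiples of the second column (which only moves it inside $1 + \langle 3 \rangle = \{1,4\}$), and adding multiples of the first column to the second destroys the cleared row unless the multiple is divisible by $d_1 = 2$ --- precisely the kind of auxiliary move your sketch never introduces. A correct induction reduces instead along the row of the \emph{largest} invariant factor $d_k$: after column-reducing the last row to $(0, \dots, 0, g)$ with $\gcd(g, d_k) = 1$, the divisibility $d_j \mid d_k$ forces $d_k x = 0$ in each $\Z_{d_j}$, whence the first $n-1$ columns generate $\Z_{d_1} \times \cdots \times \Z_{d_{k-1}} \times \{0\}$; one can then clear the upper part of the last column using them and induct on $k$. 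Since you explicitly assert the diagonalization without argument and yourself name it as the expected main obstacle, the central existence claim of the normal form remains unproven in your proposal.
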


Under the hypotheses of Theorem \ref{ThNielsenAbel}, the diagonal action of the automorphism group of $G$ on $G^n$ induces a transitive action on the set of connected components of $\M_n(G)$, which are therefore isometric. Moreover, the diameter $d_n(G)$ of any connected component of $\M_n(G)$ is finite if and only if $G$ is finite. The understanding of $d_n(G)$ for a finite Abelian group $G$ seems to be rather limited. As an example, $d_2(\Z/p\Z) = \diam(\M_2(\Z/p\Z))$ is of order $\log(p)$ \cite[Remark 3.3]{DG99}, a result which generalizes to all finite groups \cite[Theorem 2.2.3]{Pak01}.

The paper is organized as follows. Section \ref{GACC1} is dedicated to the proofs Theorem \ref{ThGACSoluble} and Proposition \ref{PropNormalGenerator}.
Section \ref{GACC2} is dedicated to the proofs of Theorem \ref{ThRecalcitranceSoluble} and Corollary \ref{CorBS}. 

The following notation will be used throughout the article. 
Given a group homomorphism $\pi: G \rightarrow Q$ and $\gb \in G^k$, we denote by $\pi(\gb) \in Q^k$ the $k$-vector obtained by componentwise application of $\pi$ to $\gb$.
We will denote by $\pi_W$ the natural epimorphism $G \twoheadrightarrow G/W(G)$. Recall that the following are equivalent:
\begin{itemize}
\item $\gb$ normally generates $G$,
\item $\pi_W(\gb)$ normally generates $G/W(G)$.
\end{itemize}

\section{The generalized Andrews-Curtis conjecture in the sense of \cite{BLM05}} \label{GACC1}

In this section we shall establish first Theorem \ref{ThGACSoluble}, with its assumption $n \ge \max(w(G), 2)$. At the end of this section
we shall consider the more exotic case $n = w(G) = 1$. 
We start by handling the easiest range for $n$, that is $n > w(G)$. Our results for such values of $n$, i.e.,
Proposition \ref{PropW} and Corollary \ref{CorTransitive}, apply to a class wider than the class of soluble groups, namely the class $\W$ defined in the introduction.

\begin{sproof}{Proof of Proposition \ref{PropW}}
Let $n > w(G)$ and let us fix a normally generating $n$-vector $\gb = (g_1, \dots, g_n)$ of $G$. 
Since $G/W(G)$ satisfies $\GACCw{1}$ by hypothesis, the vector $\pi_{W}(\gb)$ is $\AC$-equivalent to a normally generating $n$-vector whose index $i$ component is trivial for all $i > w(G)$.
Replacing $\gb$ by an $\AC$-equivalent vector if needed, we can hence assume that $g_i \in W(G)$ for every $i > w(G)$.

Consider now another normally generating $n$-vector $\hb = (h_1, \dots, h_n)$ of $G$. We shall show that $\hb$ is $\AC$-equivalent to $\gb$.
By hypothesis, the vectors $\pi_{W}(\gb)$ and $\pi_{W}(\hb)$ are $\AC$-equivalent. 
Replacing $\hb$ by an $\AC$-equivalent vector if needed, we can therefore assume that $h_i = g_i  w_i$ with $w_i \in W(G)$ for every $i$. 
Let $i \in \{ 1, \dots, w(G)\}$.
As the vector $(h_1, \dots, h_{w(G)})$ normally generates $G$,  we can turn the component of $\hb$ with index $w(G) + 1$ into $w_i$ by means of an $\M$-transformation. Subsequently we can turn the $i$-th component, namely $h_i$, into $g_i$, using an evident Nielsen transformation. Thus we obtain a normally generating $n$-tuple $\hb'$ which is $\AC$-equivalent to $\hb$ and such that $h_i' = g_i$ for $1 \le i \le w(G)$ and $h_i' \in W(G)$ for $w(G) \le i \le n$. Since $(g_1, \dots, g_{w(G)})$ normally generates $G$, we can replace $h_i'$ by $g_i$ for every $w(G) < i \le n$ by means of $\M$-transformations, which yields $\gb$.
\end{sproof}

\begin{sproof}{Proof of Corollary \ref{CorTransitive}}
Let $G$ be a finitely generated group in $\W$ and let $H = G/W(G)$. By Proposition \ref{PropW}, it suffices to show that the conclusion holds for $H$. If $H$ is Abelian, the result follows from Theorem \ref{ThNielsenAbel}. If $H$ is finite, it follows from \cite[Theorem 1.1]{BLM05}. If $H$ is a direct product of finitely many simple non-Abelian groups, the result follows from \cite[Proof of Theorem 2.1]{BLM05}.
\end{sproof}

Because the proof  Theorem \ref{ThGACSoluble} is essentially the same when $G$ is two-generated, we shall address this case first and indicate later on how to generalize to arbitrary finite ranks. In the two-generated case, the theorem can be rephrased as follows.
\begin{proposition} \label{PropReduction}
Let $G$ be a soluble group of rank $2$ and let $(a, b)$ be a normally generating pair of $G$.
Then $(ac, bd)$ is $\AC$-equivalent to $(a, b)$ for every $c, d \in \br{G, G}$.
\end{proposition}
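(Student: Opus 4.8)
The plan is to argue by induction on the derived length $\ell$ of $G$. If $\ell \le 1$ then $G$ is Abelian, so $\DG = 1$, $c = d = 1$, and there is nothing to prove. For $\ell \ge 2$, let $N$ be the last nontrivial term of the derived series of $G$: it is an Abelian normal subgroup with $N \subseteq \DG$, and $G/N$ is soluble of derived length $\ell - 1$. The images of $c$ and $d$ in $G/N$ lie in its derived subgroup, so the inductive hypothesis applied to $G/N$ and to the normally generating pair $(aN, bN)$ provides an $\AC$-transformation over $G/N$ taking $(acN, bdN)$ to $(aN, bN)$. Lifting each elementary move to $G$ (choosing arbitrary preimages of the conjugating elements) yields an $\AC$-transformation over $G$ taking $(ac, bd)$ to a pair of the shape $(an_1, bn_2)$ with $n_1, n_2 \in N$. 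Hence it suffices to treat the following \emph{fiber problem}: for every normally generating pair $(a,b)$ of $G$ and all $n_1, n_2 \in N$, the pair $(an_1, bn_2)$ is $\AC$-equivalent to $(a,b)$.

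Next I would reduce the fiber problem to a single component. Two elementary facts drive this. First, conjugating a component multiplies it on the right by a commutator, since $u^g = u[u,g]$. Second, an $\M$-transformation multiplies a component on the right by an arbitrary element of the normal closure of the remaining component, and here $\ncl{a}\,\ncl{b} = \ncl{a,b} = G \supseteq N$. Suppose one has established the one-component claim: \emph{for every normally generating pair $(x,y)$ of $G$ and every $n \in N$, the pair $(xn, y)$ is $\AC$-equivalent to $(x,y)$.} Applying it in the second slot gives $(a, bn_2)$ $\AC$-equivalent to $(a,b)$; since $\AC$-transformations preserve normal generation, $(a, bn_2)$ again normally generates $G$, so applying the claim in the first slot to the pair $(a, bn_2)$ clears $n_1$ and yields $(an_1, bn_2)$ $\AC$-equivalent to $(a, bn_2)$, whence to $(a,b)$.

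The heart of the argument, and the step I expect to be the main obstacle, is the one-component claim. Fixing the second component at $y$, the perturbations of the first component achievable in a single move are the right multiplications by a commutator $[x,g]$ (a conjugation) and by an element of $\ncl{y}$ (an $\M$-transformation). I would first check that these single-move perturbations already generate all of $N$: working modulo $\ncl{y}$, the quotient $G/\ncl{y}$ is normally generated by the image of $x$ alone, so its derived subgroup is the normal closure of the commutators $[x,g]$; pulling back, the subgroup generated by $\ncl{y}$ together with all $[x,g]$ contains $\DG$ and hence $N$. The difficulty is to upgrade this from generators to arbitrary elements, that is, to show that the set of reachable perturbations is closed under multiplication rather than merely containing a generating set. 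This is where the Abelian structure of the bottom layer $N$ is essential: because $N$ is Abelian and normal, right multiplication of the inputs by $N$-elements propagates through each elementary move as a $\Z$-linear (indeed $\Z[G]$-module) operation on $N$, and one aims to exploit this to reorganize products of realizable perturbations as realizable perturbations, so that the reachable set is a $\Z[G]$-submodule of $N$, necessarily equal to $N$. Controlling this linearization is the technical core; the scheme is then run one derived layer at a time, completing the induction.
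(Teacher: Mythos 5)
Your top-level scaffolding (induction on the derived length, lifting an $\AC$-transformation over $G/N$ move by move, and thereby reducing to perturbations by elements of the last nontrivial derived term $N$) is sound, and in spirit it even echoes the paper, where the descent through the derived series happens inside Lemma \ref{LemNormalClosure}. But there is a genuine gap exactly where you locate the ``technical core'', and the observations you offer there do not come close to filling it. First, the reachable set is an orbit, not a subgroup: after the move $(x,y)\mapsto(x\br{x,g},y)$ the base point has changed, so the next conjugation contributes $\br{x\br{x,g},g'}$, not $\br{x,g'}$; knowing that the one-move perturbations \emph{at the base point} generate something large says nothing about reaching a prescribed $n \in N$ by a composite of moves. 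Second, your generation statement is about the subgroup of $G$ generated by $\ncl{y}$ together with all $\br{x,g}$ --- a subgroup containing $\DG$ --- whereas what your fiber argument needs are fiber-preserving perturbations, i.e. commutators $\br{x,g}$ that happen to lie in $N$ and elements of $\ncl{y}\cap N$; that $N$ is contained in the big subgroup does not mean these much scarcer perturbations generate $N$. Finally, no argument is given that the reachable subset of $N$ is closed under multiplication or stable under the $\Z\lbrack G\rbrack$-action (``one aims to exploit this'' is a plan, not a proof), and this closure is precisely the hard content of the proposition.

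It is instructive to contrast this with how the paper escapes the orbit-versus-subgroup trap: it never builds up perturbations incrementally. It first normalizes via Theorem \ref{ThNielsenAbel} so as to reduce to pairs of the form $(ac, b^k d)$ where $(a,b)$ is an honest generating pair of $G$ (Proposition \ref{PropBToTheK}); the decisive point is then Lemma \ref{LemNormalClosure}$(ii)$, taken from \cite{BO12}: if $c$ is a product of weighted commutators each involving $a^{\pm 1}$ at least \emph{twice}, then $a \in \ncl{ac}$ outright, so $\DG \subset \ncl{ac}$ and a single $\M$-transformation (plus two or three further moves) finishes. The case where some factors of $c$ involve $a^{\pm 1}$ fewer than twice is handled by a substitution trick: replace $b^{\pm k}$ by $d^{\mp 1}$ (legitimate modulo $\ncl{b^k d}$), which injects extra occurrences of $a^{\pm 1}$ because every commutator factor of $d$ involves $a^{\pm 1}$; the underlying occurrence-counting induction through the derived series is where the solubility is actually spent. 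Your module-theoretic closure claim would have to reprove a statement of this strength, and your scheme never produces a true generating pair, which makes that even harder. (A separate, fixable quibble: your inductive hypothesis is stated for soluble groups of rank $2$, but $G/N$ can have rank $1$ --- e.g. a metabelian $G$ with cyclic abelianization --- so the inducted statement should be phrased for normally generating pairs of soluble groups of rank at most $2$.)
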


Proposition \ref{PropReduction} can in turn be reduced to the following
\begin{proposition} \label{PropBToTheK}
Let $G$ be a soluble group of rank $2$. Let $(a, b)$ be a generating pair of $G$ and let $k \in \Z$ be such that 
$(a, b^k)$ normally generates $G$.
Then $(ac, b^kd)$ is $\AC$-equivalent to $(a, b^k)$ for every $c, d \in \br{G, G}$.
\end{proposition}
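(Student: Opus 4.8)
The plan is to argue by induction on the derived length $s$ of $G$. When $s \le 1$ the group is Abelian, so $\DG = 1$ forces $c = d = 1$ and there is nothing to prove. For the inductive step I would set $A = G^{(s-1)}$, the last nontrivial term of the derived series; it is an Abelian normal subgroup of $G$ contained in $\DG$, and $G/A$ is soluble of derived length $s-1$. Since $A \subseteq \DG$ we have $(G/A)_{ab} = \Gab$, so the image of $(a,b)$ still generates $G/A$, the image of $(a,b^k)$ still normally generates it, and the images of $c,d$ still lie in $[G/A,G/A]$. By the induction hypothesis $(ac,b^kd)$ and $(a,b^k)$ have $\AC$-equivalent images in $G/A$. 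Because every elementary $\AC$-transformation of $(G/A)^2$ lifts to one of $G^2$ (lift the conjugating elements arbitrarily; the Nielsen moves are componentwise), applying the lifted sequence to $(ac,b^kd)$ yields a pair that reduces to $(a,b^k)$ modulo $A$, hence equals $(a\alpha,b^k\beta)$ with $\alpha,\beta \in A$. It therefore suffices to treat the case $c,d \in A$, that is, to show $(a\alpha,b^k\beta)$ is $\AC$-equivalent to $(a,b^k)$ for all $\alpha,\beta \in A$.

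The heart of the matter is this absorption of $A$-valued errors, and here I would exploit two elementary mechanisms. First, replacing a component $x$ by a conjugate $x^g$ multiplies it on the right by the commutator $[x,g]$, since $x^g = x[x,g]$; thus displacing a component by a commutator with that component is a single elementary $\AC$-move, available for free. Second, an $\M$-transformation multiplies a component $x$ by an arbitrary element $w$ of the normal closure $\ncl{y}$ of the other component. Combining the two, I claim that the set of elements reachable as a first component, while the second is held fixed at $y$, is exactly the set of $x' \in G$ whose image in $G/\ncl{y}$ is conjugate to the image of $x$: closure under $\M$-moves fills an entire coset of $\ncl{y}$, while closure under conjugation moves among the cosets lying over a single conjugacy class of $G/\ncl{y}$, and both operations visibly leave the second component untouched.

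With this description the problem becomes module-theoretic: regard $A$ as a $\ZG$-module via conjugation. Absorbing $\alpha$ into the first component (second held at $b^k$) amounts to showing that the image of $\alpha$ in $G/\ncl{b^k}$ is a commutator $[\bar a, \bar h]$, equivalently that it lies in the submodule cut out by $\bar a - 1$; symmetrically for $\beta$ via $G/\ncl{a}$. Since $(a,b^k)$ normally generates $G$ we have $\ncl{a}\,\ncl{b^k} = G$, so these two quotients are soluble groups normally generated by a single element, and since $(a,b)$ generates $G$ the module $A \subseteq \DG$ is generated over $\ZG$ by iterated commutators in $a$ and $b$. The main obstacle will be to combine these two facts to certify that every element of $A$ is so absorbable; one cannot expect the naive decomposition $A = (A \cap \ncl{a})(A \cap \ncl{b^k})$ to hold, which is precisely why the conjugacy-modulo-normal-closure description above, rather than a crude intersection, is needed, and why one must allow $\alpha$ and $\beta$ to be distributed between the two components and the modifications of the two coordinates to be interleaved, each modification enlarging the normal closure available for the next. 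Establishing this surjectivity of the relevant augmentation action on $A$ is the technical core, and it is exactly the step that parallels and refines the proof of \cite[Theorem 3.1]{BO12}; once it is in place, the Abelianness of $A$ lets one absorb a product of individually absorbable factors one at a time, which closes the induction.
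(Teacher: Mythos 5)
You have assembled correct but routine reductions and then explicitly deferred the one step that carries all the difficulty. The induction on derived length, the lifting of $\AC$-moves along $G \twoheadrightarrow G/A$ to reduce to errors $\alpha, \beta \in A = G^{(s-1)}$, and the description of the elements reachable in one component (those whose image modulo the normal closure of the other component is conjugate to the image of the current entry) are all sound. But the absorption of the $A$-valued errors --- what you call ``the surjectivity of the relevant augmentation action on $A$'' --- \emph{is} the proposition; you state yourself that it is ``the main obstacle'' and ``the technical core,'' and you give no argument for it. Moreover, the single-quotient version you gesture at (the image of $\alpha$ in $G/\ncl{b^k\beta}$ should be a single commutator $\br{\bar a, \bar h}$) is a weight-one absorption statement that one should not expect to hold once the solubility length exceeds $2$: compare Proposition \ref{PropNormalGenerator}$(iii)$, where a polycyclic group of solubility length $3$ has infinitely many non-$\AC$-equivalent weight elements precisely because commutator errors on a normal generator cannot in general be removed by conjugation. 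Your own caveat about interleaving moves on the two components acknowledges that the naive version fails, but you supply no mechanism that makes the interleaved version succeed, so the proof has a genuine gap at its center.

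The paper closes exactly this gap by a quantitative device your outline never touches: weighted-commutator counting tied to the derived series (Lemma \ref{LemNormalClosure}, i.e., Lemmas 3.1 and 3.2 of \cite{BO12}), applied globally rather than through an induction on derived length. One writes $c$ as a product of weighted commutators in conjugates of $a$ and $b^k$, then uses a single $\M$-transformation --- substituting $d^{\mp 1}$ for the occurrences of $b^{\pm k}$, legitimate since $b^k \equiv d^{-1} \bmod \ncl{b^kd}$ --- to arrange that every factor involves $\apm$ at least twice; this works because $d \in \DG$ with $G = \langle a, b\rangle$ is a product of commutators each already containing $\apm$, so the substitution manufactures the missing occurrences. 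Lemma \ref{LemNormalClosure}$(ii)$ then gives $a \in \ncl{ac'}$, hence $\DG \subset \ncl{ac'}$, after which an explicit short sequence ($\M$-move, Nielsen move, $\M$-move, Nielsen move) reaches $(a, b^k)$. The ``at least twice'' threshold, and the descent along $G^{(i)}$ encoded in Lemma \ref{LemNormalClosure}$(i)$ that guarantees the rewriting terminates, are precisely the ingredients your module-theoretic reformulation would need and does not have; to complete your proposal you would essentially have to reprove them.
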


Let us postpone the proof of Proposition \ref{PropBToTheK} to the end of this section. Taking this proposition for granted, we can now prove 
Theorem \ref{ThGACSoluble} provided $n = 2$.

\begin{sproof}{Proof of Theorem \ref{ThGACSoluble}: the case $n =2$}
Consider two normally generating pairs $\gb$ and $\hb$. Since each $\AC$-move on $G^2$ induces a Nielsen transformation on $G_{ab}^2$, the generating pairs $\pi_{ab}(\gb)$ and $\pi_{ab}(\hb)$ are Nielsen equivalent if $\gb$ and $\hb$ are $\AC$-equivalent. 
Let us show that Proposition \ref{PropReduction} implies the converse. To this end, assume that $\pi_{ab}(\gb)$ and $\pi_{ab}(\hb)$ are Nielsen equivalent and write $\gb = (a, b)$. Applying to $\hb$ a Nielsen transformation taking $\pi_{ab}(\hb)$ to $\pi_{ab}(\gb)$, we obtain a normally generating pair $\hb'$ of the form $(ac, bd)$ with $c, d \in \br{G, G}$. By Proposition \ref{PropReduction}, the pair $\hb'$, and hence $\hb$, is $\AC$-equivalent to $\gb$.

Let us now prove that Proposition \ref{PropReduction} follows from Proposition \ref{PropBToTheK}.
To do so, consider two normally generating pairs $\hb = (x, y), \hb' = (xc, yd)$ of $G$ with $c, d \in \br{G,  G}$. By Theorem \ref{ThNielsenAbel}, there is a generating pair $\gb = (a, b)$ of $G$ and $k \in \Z$ such that $\pi_{ab}(\hb)$ is Nielsen equivalent to $\pi_{ab}(a, b^k)$. Applying a suitable Nielsen transformation to $\hb$, we obtain an $\AC$-equivalent pair of the form $(ac_1, b^kd_1)$ with $c_1, d_1 \in \br{G, G}$.
Applying the same Nielsen transformation to $\hb'$, we obtain an $\AC$-equivalent pair of the form $(ac_2, b^kd_2)$ with $c_2, d_2 \in \br{G, G}$. By Proposition \ref{PropBToTheK}, the latter two pairs are both Nielsen equivalent to $(a, b^k)$. As a result, the pairs $\hb$ and $\hb'$ are $\AC$-equivalent.
\end{sproof}

The next definitions and preliminary results will enable us to present the proof of Proposition \ref{PropBToTheK} and the general case of Theorem \ref{ThGACSoluble}.
Our proofs will follow closely the lines of \cite[Theorem 3.2]{BO12}'s proof. First we need to generalize 
the concept of weighted commutators used by the latter.
Recall that $\br{G, G}$ denotes the commutator subgroup of $G$, that is the subgroup generated by the commutators $\br{x, y} \Doteq x^{-1}y^{-1}xy$ for $x, y \in G$.
Let $a$ and $b$ be arbitrary elements of a group $G$. \emph{Commutators of weight $w$ in $a$ and $b$} 
are defined inductively as follows. For $w = 1$, these are taken to be $\apm$ and $\bpm$. 
A commutator of weight $w > 1$ in $a$ and $b$ is then defined to be an element of $G$
expressible as $\br{c_1, c_2}$ where $c_1$ and $c_2$ are respectively commutators
of weights $w_1, w_2 < w$ in $a$ and $b$, such that $w_1 +  w_2 = w$. 
We define similarly \emph{commutators of weight $w$ in conjugates of $a$ and $b$}, 
these being just the conjugates of $\apm$ and $\bpm$ if $w = 1$.  
It is easy to prove, using the standard group identities
\begin{align*}
\br{xy, z} = \br{x, z}\br{\br{x, z}, y}\br{y, z} \\
\br{x,yz} = \br{x, z} \br{x, y}\br{\br{x, y}, z}
\end{align*}

that every element of the commutator subgroup of $\langle a, b \rangle$ (resp. of the commutator subgroup of $\ncl{a, b}$) can be expressed as a product of finitely many commutators of weights $\ge 2$ in $a$ and $b$ (resp. commutators in conjugates of $a$ and $b$).
The proofs of  \cite[Lemmas 3.1 and 3.2]{BO12} can be used almost verbatim to demonstrate the following
\begin{lemma} \label{LemNormalClosure}
Let $G$ be a soluble group such that $w(G) = 2$ and let $a, b, c \in G$.
 Then the following assertions hold.
\begin{itemize}
\item[$(i)$] \cite[Lemma 3.1]{BO12}
If $c$ is expressible as a weighted commutator in conjugates of $a$ and  $b$ involving $\apm$, and $g$ is any element of $G^{(i)}$, the $i$-th term of the derived series of $G$, then the result $\hat{c}$ of replacing every occurrences of $a$ in $c$ by $g$, is also in $G^{(i)}$.
\item[$(ii)$] \cite[Lemma 3.2]{BO12}
If $G = \langle a, b \rangle$ (resp. $G = \langle a, b \rangle^G$) and if $c$ is expressible as a product $c_1 \cdots c_n$ of weighted commutators $c_i$ in $a$ and  $b$ (resp. in conjugates of $a$ and $b$), of arbitrary weights $\ge 2$,
each involving $\apm$ at least twice, then $\langle ac \rangle^G$ contains $a$. 
\end{itemize}
\end{lemma}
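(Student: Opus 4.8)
The plan is to prove both assertions by induction on commutator weight, following the proofs of \cite[Lemmas 3.1 and 3.2]{BO12} almost verbatim; the only new feature is that the weight-one building blocks are now conjugates of $\apm$ and $\bpm$ instead of $\apm$ and $\bpm$ themselves, a change that is absorbed by the normality of the subgroups involved. Throughout I use that each term $G^{(i)}$ of the derived series is a normal (indeed characteristic) subgroup of $G$ and that $G^{(i+1)} = \br{G^{(i)}, G^{(i)}}$.

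For assertion $(i)$ I induct on the weight $w$ of $c$. If $w = 1$, then since $c$ involves $\apm$ it is a conjugate $x^{-1}\apm x$, so $\hat{c} = x^{-1} g^{\pm 1} x \in G^{(i)}$ by normality. If $w > 1$, write $c = \br{c_1, c_2}$ with $c_1, c_2$ of smaller weight; as $c$ involves $\apm$, at least one factor does. If both do, then $\hat{c_1}, \hat{c_2} \in G^{(i)}$ by induction and $\hat{c} = \br{\hat{c_1}, \hat{c_2}} \in \br{G^{(i)}, G^{(i)}} = G^{(i+1)} \subseteq G^{(i)}$. If only $c_1$ involves $\apm$, then $\hat{c_2} = c_2$ while $\hat{c_1} \in G^{(i)}$ by induction, and normality of $G^{(i)}$ gives $\hat{c} = \br{\hat{c_1}, c_2} \in G^{(i)}$. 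This settles $(i)$, and notably uses no solubility.

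For assertion $(ii)$, set $N = \ncl{ac}$ and pass to $\ov{G} = G/N$, writing $\ov{x}$ for images. The relation $ac \in N$ yields $\ov{a} = \ov{c}^{-1} = (\ov{c_1} \cdots \ov{c_n})^{-1}$. I will show $\ov{a} \in \ov{G}^{(i)}$ for every $i \ge 0$; since $\ov{G}$ is soluble its derived series reaches $1$, forcing $\ov{a} = 1$, i.e. $a \in N = \ncl{ac}$, which is the claim. The case $i = 0$ is trivial. The inductive step rests on the sub-claim: \emph{if $\ov{a} \in \ov{G}^{(i)}$ and $c_j$ is a weighted commutator in (conjugates of) $a$ and $b$ involving $\apm$ at least twice, then $\ov{c_j} \in \ov{G}^{(i+1)}$.} Granting this, each $\ov{c_j} \in \ov{G}^{(i+1)}$, hence $\ov{c} \in \ov{G}^{(i+1)}$ and $\ov{a} = \ov{c}^{-1} \in \ov{G}^{(i+1)}$, closing the induction.

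The remaining task is the sub-claim, and this bookkeeping — extracting exactly what the hypothesis ``involving $\apm$ at least twice'' buys — is the only real obstacle, since it is precisely what converts one derived level into the next. I prove it by a secondary induction on the weight of $c_j$, working in $\ov{G}$ with $\ov{a}$ playing the role of the distinguished element of $\ov{G}^{(i)}$. Writing $c_j = \br{d_1, d_2}$, the two or more occurrences of $\apm$ are split between $d_1$ and $d_2$. If they all lie in one factor, say $d_1$, then $d_1$ itself involves $\apm$ at least twice, so $\ov{d_1} \in \ov{G}^{(i+1)}$ by the secondary induction and $\ov{c_j} = \br{\ov{d_1}, \ov{d_2}} \in \ov{G}^{(i+1)}$ by normality. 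Otherwise each of $d_1, d_2$ involves $\apm$ at least once, so applying $(i)$ inside $\ov{G}$ with $g = \ov{a} \in \ov{G}^{(i)}$ gives $\ov{d_1}, \ov{d_2} \in \ov{G}^{(i)}$, whence $\ov{c_j} = \br{\ov{d_1}, \ov{d_2}} \in \br{\ov{G}^{(i)}, \ov{G}^{(i)}} = \ov{G}^{(i+1)}$. This proves the sub-claim and with it $(ii)$; the normally generated case $G = \ncl{a, b}$ runs identically, assertion $(i)$ being phrased for conjugates precisely so as to cover it.
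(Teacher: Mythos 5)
Your proof is correct and takes essentially the same route as the paper, which gives no independent argument but states that the proofs of \cite[Lemmas 3.1 and 3.2]{BO12} apply almost verbatim: induction on commutator weight for $(i)$ (with conjugation absorbed by normality of $G^{(i)}$), and for $(ii)$ a descent along the derived series in which the ``$\apm$ at least twice'' hypothesis, via $(i)$ applied with $g = \ov{a}$, pushes $\ov{a}$ one derived level deeper at each step until solubility forces $\ov{a} = 1$ in $G/\ncl{ac}$. The only cosmetic difference is that you work in the quotient $G/\ncl{ac}$ rather than with congruences modulo $\ncl{ac}$ inside $G$.
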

We are now in position to prove Proposition \ref{PropBToTheK}.

\begin{sproof}{Proof of Proposition \ref{PropBToTheK}}
Let $(a, b)$ be a generating pair of $G$  and let $k \in \Z$ such that $\ncl{a, b^k} = G$.
Let $\gb = (ac, b^k d)$ with $c, d \in \DG$. We can assume that $d$ is a product of non-trivial weighted commutators of $a$ and $b$, so that each of these commutators involves $\apm$ at least once.

We shall assume first that $c$ is expressible as a product of weighted commutators in conjugates of $a$ and  $b^k$, each involving $\apm$ at least twice. By Lemma \ref{LemNormalClosure}.2, we have $a \in \ncl{ac}$. Since $G = \langle a, b \rangle$, we deduce that $\DG \subset \ncl{a}$.  Therefore we can turn $\gb$ into $(ac, b^kc)$ by means of a single $\M$-transformation. Using a subsequent Nielsen move, we get  $(ab^{-k}, b^kc)$. As $\br{G, G} \subset \ncl{ab^{-k}}$, we obtain $(ab^{-k}, b^k)$ by applying another $\M$-transformation. Applying a last Nielsen transformation yields $(a, b^k)$, which proves the result in this case.

If not all of the commutator factors $c_j$ of $c$ involve $\apm$ at least twice, we replace every occurrence of 
$b^{\pm k }$ in all of them by $d^{\mp 1}$ and expand the result as a product 
$c_{j1}c_{j2} \cdots c_{jk}$ of commutators in $a$ and $b$ using the above group identities. 
Since $d$ is a product of commutators in $a$ and $b$ involving each at least one occurrence of $\apm$, 
this procedure must produce additional occurrences of $\apm$, that is, all commutators 
$c_{jr}$ will have at least two occurrences of $\apm$. We thus obtain, via a single $\M$-transformation, a pair $(ac', b^k d)$ where $c'$ is a product of commutators in $a$ and $b$ all of which involve $\apm$ at least twice. 
Lemma \ref{LemNormalClosure}.$1.ii$ applies and proceeding now as in the earlier case, we have that at most two more $\M$-transformations to get to $(a, b^k)$.
\end{sproof}

We will prove now Theorem \ref{ThGACSoluble} in full generality.
The definition of \emph{commutators of weight $k$ in (conjugates of) $x_1,\dots,x_n$} is a straightforward generalization of the definition in the case $n = 2$ given above. The following generalization of Lemma \ref{LemNormalClosure} is immediate.

\begin{lemma} \label{LemNormalClosureN}
Let $G$ be a soluble group such that $w(G) = n$ and let $x_1, \dots, x_n, c$ be elements of $G$.
Then the following assertions hold.
\begin{itemize}
\item[$(i)$] \cite[Lemma 3.1$'$]{BO12}
If $c$ is expressible as a weighted commutator in conjugates of $x_1, \dots, x_n$, each involving $x_1^{\pm 1}$, 
and $g$ is any element of $G^{(i)}$, the $i$-th term of the derived series of $G$, 
then the result $\hat{c}$ of replacing every occurrences of $x_1$ in $c$ by $g$, is also in $G^{(i)}$.
\item[$(ii)$] \cite[Lemma 3.2$'$]{BO12}
If $G = \langle x_1, \dots, x_n \rangle$ (resp. $G = \langle x_1, \dots, x_n \rangle^G$) and if $c$ is expressible as a product $c_1 \cdots c_n$ of commutators $c_i$ in $x_1, \dots, x_n$ (resp. in conjugates of $x_1, \dots, x_n$), of arbitrary weights $\ge 2$,
each involving $x_1^{\pm 1}$ at least twice, then $\langle x_1c \rangle^G$ contains $x_1$. 
\end{itemize}
\end{lemma}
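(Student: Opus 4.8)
The plan is to prove both assertions by induction, following the $n = 2$ case of Lemma \ref{LemNormalClosure} almost verbatim, with $x_1$ playing the role of $a$ and $x_2, \dots, x_n$ collectively playing the role of $b$. The only structural inputs are that each term $G^{(i)}$ of the derived series is characteristic, hence normal, so that $\br{G^{(i)}, G} \subseteq G^{(i)}$ and $\br{G^{(i)}, G^{(i)}} = G^{(i+1)} \subseteq G^{(i)}$, together with the solubility of $G$, which is needed only to terminate the induction in $(ii)$ since it guarantees $G^{(\ell)} = 1$ for some $\ell$.

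For $(i)$ I would induct on the weight $w$ of $c$. If $w = 1$, then $c$ is a conjugate of $x_1^{\pm 1}$, so $\hat{c}$ is a conjugate of $g^{\pm 1}$; as $g \in G^{(i)}$ and $G^{(i)}$ is normal, $\hat{c} \in G^{(i)}$. If $w > 1$, write $c = \br{c_1, c_2}$ with $c_1, c_2$ of smaller weight. At least one factor, say $c_1$, involves $x_1^{\pm 1}$, so $\hat{c_1} \in G^{(i)}$ by induction; if $c_2$ does not involve $x_1^{\pm 1}$ then $\hat{c} = \br{\hat{c_1}, c_2} \in \br{G^{(i)}, G} \subseteq G^{(i)}$, while if $c_2$ does, then $\hat{c_2} \in G^{(i)}$ too and $\hat{c} \in \br{G^{(i)}, G^{(i)}} = G^{(i+1)} \subseteq G^{(i)}$. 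The same induction yields the sharper statement I actually need for $(ii)$: if $c$ involves $x_1^{\pm 1}$ \emph{at least twice}, then $\hat{c} \in G^{(i+1)}$. Indeed the two occurrences split between $c_1$ and $c_2$; if one factor carries both it lands in $G^{(i+1)}$ by induction and the commutator with the other stays in $G^{(i+1)}$ by normality, while if each factor carries exactly one occurrence then $\hat{c_1}, \hat{c_2} \in G^{(i)}$ and $\hat{c} \in \br{G^{(i)}, G^{(i)}} = G^{(i+1)}$.

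For $(ii)$, set $N = \ncl{x_1 c}$ and pass to $\ov{G} = G/N$, which is soluble of some derived length $\ell$. In $\ov{G}$ we have $\ov{x_1} = \ov{c}^{-1}$, a product of inverses of images of commutators in the $\ov{x_j}$ (resp. their conjugates), each involving $\ov{x_1}$ at least twice. I would then show $\ov{x_1} \in \ov{G}^{(i)}$ for all $i$ by induction on $i$: the case $i = 0$ is trivial, and if $\ov{x_1} \in \ov{G}^{(i)}$, applying the sharper statement above inside $\ov{G}$ (with $g = \ov{x_1}$) to each commutator factor places it in $\ov{G}^{(i+1)}$, whence $\ov{c}^{-1} \in \ov{G}^{(i+1)}$ and thus $\ov{x_1} \in \ov{G}^{(i+1)}$. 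Taking $i = \ell$ gives $\ov{x_1} = 1$, i.e. $x_1 \in N = \ncl{x_1 c}$, which is the claim.

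I do not anticipate a genuine obstacle: the argument is purely formal once the derived-series inclusions are recorded. The only points demanding care are the bookkeeping of how many occurrences of $x_1^{\pm 1}$ each commutator factor receives, since this is exactly what upgrades $G^{(i)}$ to $G^{(i+1)}$ and drives the descent in $(ii)$; the harmless overloading of $n$ as both the weight $w(G)$ and the number of commutator factors in the product $c_1 \cdots c_n$; and the essential appeal to solubility to terminate the induction on the derived length of $\ov{G}$.
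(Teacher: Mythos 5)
Your proof is correct and takes essentially the same route as the paper, which states the lemma as an immediate generalization of the $n=2$ case and defers to the proofs of \cite[Lemmas 3.1 and 3.2]{BO12}: your induction on the weight of the commutator (with the two-occurrence bookkeeping upgrading $G^{(i)}$ to $G^{(i+1)}$) and the descent down the derived series of $G/\langle x_1 c\rangle^G$ via the substitution $g = \overline{x_1}$ are precisely those arguments. There is no gap; your incidental observation that the generating hypothesis in $(ii)$ plays no role in the descent is also consistent with the cited proofs, where it merely reflects the context in which the expression for $c$ arises.
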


\begin{sproof}{Proof of Theorem \ref{ThGACSoluble}: the general case}
By Corollary \ref{CorTransitive}, we can assume that $n = w(G) \ge 2$.
Let $\gb$ and $\hb$ be two normally generating $n$-vectors. Since each $\AC$-move on $G^n$ induces a Nielsen transformation on $G_{ab}^n$, the generating pairs $\pi_{ab}(\gb)$ and $\pi_{ab}(\hb)$ are Nielsen equivalent if $\gb$ and $\hb$ are $\AC$-equivalent. 

We shall prove now the converse. To this end, assume that $\pi_{ab}(\gb)$ and $\pi_{ab}(\hb)$ are Nielsen equivalent. 
Reasoning as in the case $n = 2$, we can find a generating $n$-vector $(a_1, \dots, a_{n -1}, b)$ of $G$ and $k \in \Z$ such that 
$\pi_{ab}(\gb)$ and $\pi_{ab}(\hb)$ are both Nielsen equivalent to $\pi_{ab}(a_1, \dots, a_{n -1}, b^k)$. Therefore it suffices to show that 
$(a_1c_1, \dots, a_{n -1}c_{n -1}, b^kd)$ is $\AC$-equivalent to $(a_1, \dots, a_{n -1}, b^k)$ for every $c_1, \dots, c_{n -1}, d \in \DG$.

If in the expression of $c_i$ as a product of weighted commutators in conjugates of $a_1,\dots,a_{n -1}, b^k$, some of the factors do not involve $a_i^{\pm 1}$ at least twice, then we replace the occurrences of $b^k$ and $a_j \,(j \neq i)$ appearing in those factors 
with $d^{\mp 1}$ (since $b^k \equiv d^{-1} \mod \langle b^k d\rangle^G$) and the corresponding $c_j^{\mp 1}$ (since $a_j \equiv c_j^{-1} \mod \langle a_j c_j \rangle^G$), 
and repeat this until the factors contain at least two occurrences of $a_i$, or they become trivial (as elements of $G^{(k)}$, 
where $k$ is the solubility length of $G$), which will occur if $a_i^{\pm 1}$ fails to appear, 
or appears just once, in the course of this iteration.  Thus, performing at most $n - 1$ $M$-transformations if needed, we can assume that $c_i$ is expressible as a product of commutators in $a_1, \dots, a_{n -1}, b^k$, of arbitrary weights $\ge 2$, each involving $a_i^{\pm 1}$ at least twice. Since 
$\langle a_1c_1, \dots, a_{n - 1}c_{n - 1} \rangle^G$ coincides with $\langle a_1, \dots, a_{n - 1}\rangle^G$ by Lemma \ref{LemNormalClosureN}.$ii$, it contains $\DG$. Hence a subsequent $\M$-transformation can be used to obtain the vector $(a_1c_1, \dots, a_{n -1}c_{n -1}, b^kc_1)$. By means of a single Nielsen move, we get $(a_1b^{-k}, \dots, a_{n -1}c_{n -1}, b^kc_1)$. As 
$\langle a_1b^{-k}, \dots, a_{n -1}c_{n -1} \rangle^G$ still contains $\DG$, we can iterate this procedure until we obtain a vector of the form 
$(a_1b^{-k}, \dots, a_{n -1}b^{-k}, b^k)$. Eventually, $n - 1$ additional Nielsen transformations yield $(a_1, \dots, a_{n -1}, b^k)$.
\end{sproof}

We conclude this section with the proof of Proposition \ref{PropNormalGenerator} which deals with finitely generated groups of weight one. Finitely generated groups of weight at most one have been characterized as the homomorphic images of knot groups, while groups of weight at most $k$ are the homomorphic images of  $k$-complement link groups \cite{GA75}. A knot group can have infinitely many non-$\AC$ equivalent elements, each of which normally generates the whole group \cite{Plo83, SWW10}. By contrast, Proposition \ref{PropNormalGenerator} shows that the study of $\AC$-equivalence is trivial for all weight one metabelian groups and also some groups with arbitrary solubility lengths.


\begin{sproof}{Proof of Proposition \ref{PropNormalGenerator}}

$(i)$. Let $G$ be a finitely generated soluble group. By Lemma \ref{LemW}, we have $w(G) = w(G_{ab}) = w(\GW)$, a result originally proved by Baer \cite[Folgerung 2.10 and Satz 6.4]{Bae64}. Therefore $w(G) = 1$ if and only if $\Gab$ is cyclic.

$(ii)$. Let $G$ be a finitely generated metabelian group with cyclic abelianization $C = \langle c \rangle$. Let $a$ be a lift of $c$ in $G$. 
Every weight element of $G$ which is also a lift of $c$ must be of the form $ad$ for some $d \in \DG$. Therefore, it suffices to show that every $d \in \DG$ is of the form $\br{a, e(d)}$ for some $e(d) \in \DG$.
By setting $g^{c^k} \Doteq g^{a^k}$ for $g \in \DG$ and $k \in \Z$, we define an action of $C$ on $\DG$. Extending linearly this action to $\ZC$, the integral group ring of $C$, we turn $\DG$ into a module over $\ZC$. For every $k, l \in \Z$ and every $d, e \in \DG$, the following identity holds: 
$\br{a^kd, a^l e} = e^{1 - c^k}d^{-(1 - c^l)}$. Since $1 - c$ divides both $1 - c^k$ and $1 - c^l$ in $\ZC$, we have $\br{a^kd, a^l e} \in \DG^{1 - c}$. As $G = \langle a \rangle \DG$, we deduce that $\DG = \DG^{1 - c}$, which proves the result. 

$(iii)$. See  \cite[Theorem 9.1]{Hil11}.

$(iv)$.
Let us consider finite cyclic groups $C_1, \dots, C_k$. We set $G_1 = C_1$ and $G_i = C_1 \wr \cdots \wr C_i$ 
for every $1 < i  \le k$, 
so that $G_i = G_{i - 1} \wr C_i$ if $i > 1$. It is easily checked that $(G_i)_{ab} = C_1 \times \cdots \times C_i$, and the latter group is cyclic for every $i$ since the orders $\vert C_i \vert$ of the cyclic groups $C_i$ are pairwise coprime integers. By the above assertion $(1)$, we have $w(G_i) = 1$ and every weight element of $G_i$ is a lift of a generator of $(G_i)_{ab}$.
For every $i$, we pick a generator $c_i$ of $C_i$
and define a lift $x_i$ of $(c_1, \dots, c_i) \in (G_i)_{ab}$ in $G_i$ in the following way. We set $x_1 = c_1$ and $x_i = f_i c_i$ 
where $f_i \in G_{i - 1}^{C_i}$ is defined by $f_i(1) = x_{i - 1}$ and for $i > 1$ we set $f_i(c) = 1$ for $c \neq 1$. 
We shall prove by induction on $k$ that the order of $x_k$ coincides with the order of $(G_k)_{ab}$ and that $x_k$ is self-centralizing in $G_k$, i.e., the centralizer of $x_k$ in $G_k$ is the cyclic subgroup $\langle x_k \rangle$ generated by $x_k$. If $k = 1$, this is obvious. If $k > 1$, then $g_{k -1} \Doteq x_k^{\vert C_k \vert}$ belongs to $G_{k -1}^{C_k}$ and is defined by $g_{k -1}(c) = x_{k -1}$ for every $c \in C_k$. Since the order of $x_{k -1}$ is the order $(G_{k -1})_{ab}$ by induction hypothesis, we infer that the order of $x_k$ is the order of $(G_k)_{ab}$. 
As $x_{k - 1}$ is self-centralizing in $G_{k -1}$ by induction hypothesis and $c_k$ is self-centralizing in $C_k$, we deduce from \cite[Theorem 1.(5)]{KP70} that $x_k$ is self-centralizing in $G_k$, which proves the claim.
Eventually, we shall prove that every lift of $(c_1, \dots, c_k)$ in $G_k$ is a conjugate of $x_k$, hence the result. Every such lift is of the form $x_ k d$ for some $d \in \br{G_k, G_k}$. Therefore the result is established if the conjugacy class of $x_k$ has as many elements as $\br{G_k, G_k}$, that is $\vert G_k \vert / \vert (G_k)_{ab}\vert$ elements. This is indeed verified since the centralizer of $x_k$ is $\langle x_k \rangle$ and has $\vert (G_k)_{ab}\vert$ elements.

\end{sproof}

\section{The Andrews-Curtis conjecture in the sense of \cite{BO12}} \label{GACC2}
This section is dedicated to the proofs of Theorem \ref{ThRecalcitranceSoluble} and Corollary \ref{CorBS}.
We shall use the following definition in a preliminary result.
An homomorphism $\pi: G \rightarrow Q$ is termed \emph{normally coessential} if every normally generating vector of $Q$ lifts to a normally generating vector of $G$. Theorem \ref{ThRecalcitranceSoluble} is a straightforward consequence of \cite[Theorem 3.1]{BO12} and the following
\begin{lemma} \label{LemNormallyCoessential}
Let $G$ be a finitely generated group of rank $n$ and let $\pi_{ab}: G \twoheadrightarrow G_{ab}$ be its abelianization homomorphism.
\begin{itemize}
\item[$(i)$] Assume that $G$ is soluble. Let $\gb \in G^k$ with $k \ge n$. Then $\gb$ normally generates $G$ if and only if $\pi_{ab}(\gb)$ generates $G_{ab}$. In particular $\pi_{ab}$ is normally coessential.

\item[$(ii)$] 
If every normally generating $n$-vector of $G$ has finite recalcitrance and $\pi_{ab}$ is normally coessential then $\pi_{ab}$ is coessential.
\end{itemize}
\end{lemma}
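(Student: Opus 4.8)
The plan is to prove the two assertions in turn, the engine in both cases being the compatibility between $\M$-transformations, Nielsen transformations, and the abelianization map $\pi_{ab}$.

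For $(i)$, the forward implication is immediate: if $\gb$ normally generates $G$ then $\pi_{ab}(\gb)$ normally generates the abelian group $\Gab$, and in an abelian group normal generation is plain generation. For the converse I would set $N = \ncl{\gb}$ and argue that the soluble quotient $G/N$ is trivial. The components of $\gb$ die in $G/N$ by definition of $N$, yet their images generate $(G/N)_{ab}$ because the latter is a quotient of $\Gab$ and $\pi_{ab}(\gb)$ generates $\Gab$; hence $(G/N)_{ab} = 1$, so $G/N$ is perfect, and a perfect soluble group is trivial, forcing $N = G$. The ``in particular'' clause then follows by lifting the components of a generating (equivalently, normally generating) vector of $\Gab$ arbitrarily: such a lift maps onto a generating vector of $\Gab$, hence normally generates $G$ by the equivalence just proved.

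For $(ii)$, write $n = \rk(G)$ and fix a generating $m$-vector $\ogb$ of $\Gab$ with $m \ge n$; the goal is to produce a generating $m$-vector of $G$ lifting $\ogb$. I would first record the two elementary compatibility facts: an $\M$-transformation of $G^m$ induces a Nielsen transformation on $\Gab^m$ (replacing a component by its product with an element of the normal closure of the others becomes, after abelianization, the addition of an integral combination of the remaining components, and a permutation is realized by Nielsen moves), and conversely every Nielsen transformation of $\Gab^m$ lifts componentwise to a Nielsen transformation of $G^m$ that commutes with $\pi_{ab}$ and preserves generation of $G$. With these in hand I split on $m$. When $m = n$, normal coessentiality yields a normally generating $n$-vector $\gb$ with $\pi_{ab}(\gb) = \ogb$, and finite recalcitrance provides a sequence $\mu$ of $\M$-transformations carrying $\gb$ to a generating $n$-vector $\gb'$; the induced map $\overline{\mu}$ is Nielsen, so $\pi_{ab}(\gb') = \overline{\mu}(\ogb)$ is Nielsen equivalent to $\ogb$, and lifting $\overline{\mu}^{-1}$ to $G$ and applying it to $\gb'$ gives a generating $n$-vector whose image is exactly $\ogb$. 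When $m > n$ I avoid recalcitrance altogether: since then $m > \rk(\Gab)$, Theorem \ref{ThNielsenAbel} makes all generating $m$-vectors of $\Gab$ Nielsen equivalent, so it suffices to lift a single one; extending a generating $n$-vector of $G$ by $m - n$ trivial entries produces a generating $m$-vector of $G$ whose image generates $\Gab$, and transporting it by a lifted Nielsen transformation yields a generating lift of $\ogb$.

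The step I expect to be the crux is reconciling, in the case $m = n$, the recalcitrance reduction with the requirement to lift the prescribed vector $\ogb$ rather than merely some vector: the $\M$-transformations needed to reach a generating vector of $G$ perturb the image in $\Gab$, moving it only within its Nielsen class. The resolution is precisely the pair of compatibility facts above, which let me undo this perturbation by a Nielsen transformation lifted to $G$ without leaving the set of generating vectors of $G$. Note also that normal coessentiality alone produces only a normally generating lift, so recalcitrance is genuinely needed to upgrade it to a generating one; this is where the hypothesis enters and why the implication fails in its absence.
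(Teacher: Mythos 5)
Your proof is correct, and for the heart of the lemma --- part $(ii)$ in the case $m = n$ --- it coincides with the paper's argument: lift $\ogb$ to a normally generating $n$-vector via normal coessentiality, use finite recalcitrance to reach a generating $n$-vector by $\M$-transformations, observe that these induce a Nielsen transformation on $\Gab^n$, and lift the inverse Nielsen transformation back to $G$ to restore the prescribed image $\ogb$. Two differences are worth recording. For $(i)$, the paper simply cites Lemma \ref{LemW} (for $G$ finitely generated soluble, $G/W(G)$ is Abelian, and normal generation is detected in $G/W(G)$); your direct argument --- setting $N = \ncl{\gb}$, deducing $(G/N)_{ab} = 1$ and invoking the triviality of perfect soluble groups --- is the elementary version of the same fact and avoids Baer's result on the $N$-Frattini subgroup; note that it never uses $k \ge n$, which matters for the ``in particular'' clause since $w(\Gab) = \rk(\Gab)$ may be strictly smaller than $n = \rk(G)$. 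For $(ii)$, your case split at $m > n$ is a genuine refinement: the paper treats all $k \ge n$ uniformly, asserting that the normally generating $k$-vector ``can be transitioned to a generating $k$-vector by means of finitely many $\M$-transformations,'' whereas the stated hypothesis (finite recalcitrance of normally generating $n$-vectors, recalcitrance being defined only for vectors of length equal to $\rk(G)$) literally covers only $k = n$. Your observation that for $m > n$ coessentiality holds unconditionally --- pad a generating $n$-vector of $G$ with trivial entries and transport it by a lift of the Nielsen transformation supplied by Theorem \ref{ThNielsenAbel}, all generating $m$-vectors of $\Gab$ being Nielsen equivalent because $m$ exceeds the number of invariant factors of $\Gab$ --- cleanly closes this small gap, and shows that the recalcitrance hypothesis is only ever needed at length exactly $n$.
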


\begin{proof}
$(i)$. 
Apply Lemma \ref{LemW}.
$(ii)$. Let $\ogb$ be a generating $k$-vector of $G_{ab}$ with $k \ge n$. By hypothesis there is a lift $\gb$ of $\ogb$ in $G^k$ which normally generates $G$. 
Since $\gb$ can be transitioned to a generating $k$-vector $\hb$ of $G$ by means of finitely many $\M$-transformations, there is a Nielsen transformation which takes $\ogb$ to $\pi_{ab}(\hb)$. The inverse transformation takes $\hb$ to a generating $k$-vector of $G$ which is also a lift of $\ogb$ in $G^k$. Therefore $\pi_{ab}$ is coessential.
\end{proof}

We define now a class of metabelian groups whose abelianization homomorphisms can be studied by elementary ring-theoretic means.
Let $C = \langle a \rangle$ be a cyclic group, finite or infinite, given with a generator $a$. Let $R$ be a quotient ring of $\ZC$, the integral group ring of $C$.
We denote by $\alpha$ the image of $a$ in $R$ via the quotient map; thus $\alpha \in R^{\times}$. 
Let $G = R \rtimes_{\alpha} C$ be the semi-direct product of the additive group of $R$ with $C$, where $a$ acts on $R$ via multiplication by $\alpha$. We identify $R$ and $C$ with their natural images in $G$, i.e., we have $G = RC$ and $R \cap C = 1$. 
The commutator subgroup of $G$ is easily seen to be $(1 - \alpha )R$. 
Then we have $G_{ab} = R_C \times C$ where $R_C \Doteq R/(1- \alpha)R$ is a homomorphic image of $\Z$.
Moreover the abelianization homomorphism 
$\pi_{ab}: G \twoheadrightarrow G_{ab}$ splits as $\pi_{ab} = \pi_C \times Id_C$ where $\pi_C: R \twoheadrightarrow R_C$ is the natural map and 
$Id_C$ denotes the identity endomorphism of $C$. 

\begin{proposition} \label{PropSurjection}
Let $G = R \rtimes_{\alpha} C$ be as above. 
Then the following are equivalent:
\begin{itemize}
\item[$(i)$] The abelianization homomorphism $G \twoheadrightarrow G_{ab}$ is coessential. 
\item[$(ii)$] The natural group homomorphism $R^{\times} \rightarrow R_C^{\times}$ is surjective.
\end{itemize}
\end{proposition}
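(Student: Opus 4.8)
The plan is to translate both conditions into commutative ring theory via the decomposition $G = RC$, and then to read off the obstruction to coessentiality as the failure of a single unit to lift along $R^\times \to R_C^\times$.

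\emph{Step 1 (dictionary between generation and units).} First I would record that $R$ is cyclic as a module over $\ZC$, generated by $1 \in R$: indeed $\ZC$ surjects onto $R$ and the action of $C$ on $R$ is multiplication by powers of $\alpha$, so $\ZC \cdot x = R \cdot x = (x)$ for every $x \in R$. Consequently a single element $x \in R$ generates $R$ as a $\ZC$-module if and only if $(x) = R$, that is, if and only if $x \in R^\times$; likewise $R_C$ is cyclic over $\Z$ and a class $\bar x$ generates it precisely when $\bar x \in R_C^\times$. This is the bridge turning ``generation'' into ``units''.

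\emph{Step 2 (normal form and generation criteria).} Projecting along $G \twoheadrightarrow C$, the images of the components of any generating vector generate the cyclic group $C$; applying Theorem \ref{ThNielsenAbel} to $C$ and lifting the elementary Nielsen moves back to $G$ (each such move is a group word, hence lifts), I may assume the vector has the form $(r_1 a, g_2, \dots, g_n)$ with $r_1, g_2, \dots, g_n \in R$. Since $R$ is abelian, conjugation by $r_1 a$ acts on $R$ as multiplication by $\alpha$, so the intersection with $R$ of the subgroup generated is the $\ZC$-submodule of $R$ spanned by $g_2, \dots, g_n$ together with the norm element $\nu r_1$, where $\nu = 1 + \alpha + \dots + \alpha^{|C|-1}$ if $C$ is finite and $\nu = 0$ if $C$ is infinite. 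Thus $(r_1 a, g_2, \dots, g_n)$ generates $G$ iff $(g_2, \dots, g_n, \nu r_1) = R$, while its image generates $\Gab$ iff $\pi_C(g_2), \dots, \pi_C(g_n)$ together with $\pi_C(\nu r_1) = |C|\,\pi_C(r_1)$ generate $R_C$. These two conditions are images of one another under $\pi_C$, which already yields the routine implication that generating $G$ forces generating $\Gab$. The single identity I would keep at hand is $\nu(1-\alpha) = 1 - \alpha^{|C|} = 0$ in $R$ for finite $C$: it shows that $\nu r_1$ depends only on $\pi_C(r_1)$, and this is what makes the finite and infinite cases uniform.

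\emph{Step 3 (the two directions).} For $(ii)\Rightarrow(i)$, given a generating vector of $\Gab$ I would first Nielsen-reduce it on the $\Gab$ side and lift the moves, putting it into the normal form of Theorem \ref{ThNielsenAbel}, in which all the unit ambiguity of $\Gab$ is concentrated in one component as a factor $\delta \in R_C^\times$, with $\delta = 1$ as soon as $n > \rk(G)$. Surjectivity of $R^\times \to R_C^\times$ lets me lift $\delta$ to a unit of $R$; lifting the remaining components by $0$ or by $1$ and invoking the criterion of Step 2 (with $\nu(1-\alpha)=0$ absorbing the norm term) shows the lift generates $G$. For $(i)\Rightarrow(ii)$ I argue by contraposition: choosing $\epsilon \in R_C^\times$ not in the image of $R^\times$, the pair whose $\Gab$-image has components $(0,a)$ and $(\epsilon,1)$ in $R_C \times C$ generates $\Gab$ by Step 1, but any lift $(r_1 a, g_2)$ has $\pi_C(r_1)=0$, hence $\nu r_1 = 0$ by the identity above, so generating $G$ would force $(g_2)=R$, i.e.\ $g_2 \in R^\times$ lying over $\epsilon$ --- impossible. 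Hence coessentiality fails.

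\emph{Main obstacle.} The delicate point is Step 2: pinning down exactly which $\ZC$-submodule the generated subgroup meets $R$ in, uniformly for finite and infinite $C$, and verifying that the norm element $\nu r_1$ contributes nothing beyond $\pi_C(r_1)$. Once the identity $\nu(1-\alpha)=0$ is in place it collapses the finite case onto the infinite one, and the whole argument reduces, at $n = \rk(G)$, to lifting one unit from $R_C^\times$ to $R^\times$; for $n > \rk(G)$ the obstruction disappears, so coessentiality is genuinely decided at the minimal length.
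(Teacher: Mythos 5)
Your proof is correct in substance but takes a genuinely different, self-contained route. The paper treats the proposition as bookkeeping around three imported results: the introductory remark of \cite{Oan13} reduces coessentiality of the two-generated group $G$ to lifting generating \emph{pairs}; \cite[Corollary 1]{Guy16b} puts a generating pair of $\Gab = R_C \times C$ into the Nielsen normal form $(u, a^k)$ with $u \in R_C^{\times}$ and $\langle a^k \rangle = C$; and \cite[Lemma 7]{Guy16b} says such a pair lifts to a generating pair of $G$ exactly when $u$ lifts to a unit of $R$. You re-derive the content of the last two citations from the semidirect product structure: your Step 2 criterion --- after arranging that only the first component has nontrivial $C$-part, the vector generates $G$ if and only if the ideal $(g_2, \dots, g_n, \nu r_1)$ is all of $R$, with the identity $\nu(1-\alpha) = 0$ showing $\nu r_1$ depends only on $\pi_C(r_1)$ --- is exactly the engine behind \cite[Lemma 7]{Guy16b} (and your computation of $H \cap R$ as the $\ZC$-span of $g_2, \dots, g_n, \nu r_1$ checks out). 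Your explicit pair with image $\bigl((0,a), (\epsilon, 1)\bigr)$, where $\pi_C(r_1) = 0$ forces $\nu r_1 = 0$ and hence $g_2 \in R^{\times}$ over $\epsilon$, gives a clean contrapositive proof of $(i)\Rightarrow(ii)$. You also handle all $n \ge \rk(G)$ uniformly rather than reducing to pairs. The paper's approach buys brevity; yours buys transparency and independence from \cite{Guy16b}.

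One step needs repair, though it is fixable with tools you already set up. In Step 3, for $(ii)\Rightarrow(i)$, you invoke Theorem \ref{ThNielsenAbel} applied to $\Gab$ and assert that ``all the unit ambiguity of $\Gab$ is concentrated in one component as a factor $\delta \in R_C^{\times}$.'' That theorem's normal form is written in coordinates adapted to the \emph{invariant factor decomposition} of $\Gab$, so $\delta$ lies in $(\Z_{d_1})^{\times}$ where $d_1$ is the first invariant factor; when $C$ is finite and $\vert R_C \vert$ and $\vert C \vert$ have a proper gcd, $d_1$ differs from $\vert R_C \vert$ and the normal-form basis is not compatible with the decomposition $R_C \times C$ in which your Step 2 lifting criterion is stated, so the step does not parse as written. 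The repair is to rerun your own Step 2 reduction at the level of $\Gab$: project to $C$ and Nielsen-transform so that components $2, \dots, n$ lie in $R_C$ and the first component is $\bar r_1 a^m$ with $\langle a^m \rangle = C$; Nielsen moves among components $2, \dots, n$ then realize $\operatorname{GL}_{n-1}(\Z)$ on the cyclic group $R_C$, and adding the first component $\vert C \vert$ times to another component absorbs the term $\vert C \vert \, \bar r_1$. This concentrates the obstruction into a single component that is an additive generator of $R_C$, hence a ring unit since $R_C$ is a quotient ring of $\Z$ --- which is precisely the content of \cite[Corollary 1]{Guy16b} that the paper quotes. (A minor related point: your reduced first component should be $r_1 a^m$ with $\gcd(m, \vert C \vert) = 1$, not $r_1 a$; this is harmless, since invertibility of $m$ modulo $\vert C \vert$ yields the same norm element $\nu$ and the same $\ZC$-span.)
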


\begin{proof}
Since $G$ is two-generated, the introductory remark of \cite{Oan13} implies that $\pi_{ab}$ is coessential if and only if every generating pair of $G_{ab}$ lifts to a generating pair of $G$.
Let $\ogb$ be a generating pair of $G_{ab}$. By \cite[Corollary 1]{Guy16b}, there is a generating pair $\ohb$ which is Nielsen equivalent to $\ogb$ and of the form $(u, a^k)$ for some $u \in R_C^{\times}$ and some $k \in \Z$ such that $C = \langle a^k \rangle$. 
Clearly $\ogb$ lifts to a generating pair of $G$ if and only if $\ohb$ does.
By \cite[Lemma 7]{Guy16b}, the pair $\ohb$ lifts to a generating pair of $G$ if and only if $u$ lifts to a unit of $R$. 
\end{proof}

Thus the abelianization homomorphism of $R \rtimes_{\alpha} C$ is certainly coessential if $R_C \simeq \Z/n\Z$ with $n \in \{0, 1, 2, 3, 4, 6\}$. This also clearly holds if $1 - \alpha$ is a nilpotent element of $R$, i.e., $(1 - \alpha)^n = 0$ for some $n > 0$, which means that $G$ is nilpotent. If $G$ is nilpotent then the result is actually well-known and generalizes as follows: a
group $G$ with zero recalcitrance has a coessential abelianization homomorphism \cite[Proposition 2.2]{Myr16}.

\begin{proposition} \label{PropCoessential}
Let $R$ and $R_C$ be as above. 
Then the natural group homomorphism $R^{\times} \rightarrow R_C^{\times}$ is surjective in the following cases:
\begin{itemize}
\item[$(i)$] $R$ is finite.
\item[$(ii)$] $R = \Z_m\br{X^{\pm 1}}$, the ring of Laurent polynomials over $\Z_m \Doteq \Z/m\Z$ and $\alpha = X$.
\item[$(iii)$] $R = \Z\br{\zeta_n}$  where $\alpha = \zeta_n = e^{\frac{2i \pi}{n}}$ and $n > 0$.
\end{itemize}
\end{proposition}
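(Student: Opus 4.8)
In every case the task is to prove that reduction modulo $(1-\alpha)R$ carries $R^{\times}$ onto $R_C^{\times}$, and the plan is to first describe $R_C$ explicitly and then lift units, either formally or by an explicit construction. Note that $R$ is commutative throughout, being a quotient of the commutative ring $\ZC$.

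For $(i)$, I would appeal to the decomposition of a finite commutative ring as a finite product $R \cong \prod_i R_i$ of local rings. Every ideal of such a product splits as a product of ideals, so both $R_C$ and the map $R^{\times} \to R_C^{\times}$ decompose factor by factor, and it suffices to treat one local quotient $R_i \to R_i/I_i$. If $I_i = R_i$ the target is trivial; otherwise $I_i$ is contained in the unique maximal ideal $\mathfrak{m}_i$, hence in the Jacobson radical of $R_i$, and then an element of $R_i$ is a unit if and only if its image is. Thus every lift of a unit is a unit, which gives surjectivity.

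For $(ii)$, evaluating at $X = 1$ identifies $R_C = \Z_m\br{X^{\pm 1}}/(1-X)$ with $\Z_m$, and the inclusion of constant polynomials $\Z_m \hookrightarrow \Z_m\br{X^{\pm 1}}$ is a ring section of the quotient map. Hence $R_C$ is a retract of $R$, the induced homomorphism on unit groups is split surjective, and the claim is immediate.

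Case $(iii)$ is the heart of the proposition and the step I expect to demand genuine input. Writing $\zeta = \zeta_n$ and $R = \Z\br{\zeta} = \Z\br{X}/\Phi_n(X)$, evaluation at $X = 1$ gives $R_C \cong \Z/\Phi_n(1)\Z$. If $n$ has at least two distinct prime divisors then $\Phi_n(1) = 1$, so $R_C = 0$ and there is nothing to prove; if $n = 1$ then $R = R_C = \Z$ and the map is the identity on $\{\pm 1\}$. The only substantial situation is $n = p^k$, where $\Phi_{p^k}(1) = p$, so $R_C \cong \F_p$ and $R_C^{\times} \cong \F_p^{\times}$ is cyclic of order $p - 1$. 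Here I would use the cyclotomic units: for each integer $j$ with $\gcd(j, p) = 1$, the element $u_j \Doteq (1 - \zeta^j)/(1 - \zeta) = 1 + \zeta + \cdots + \zeta^{j-1}$ lies in $\Z\br{\zeta}$ and is a unit, since $\gcd(j, n) = 1$ (which coincides with $\gcd(j,p)=1$ as $n = p^k$) makes $\zeta^j$ a primitive $n$-th root of unity, so its inverse $(1 - \zeta)/(1 - \zeta^j)$ again lies in $\Z\br{\zeta}$ by the same expansion. As $\zeta \equiv 1 \pmod{1 - \zeta}$, we obtain $u_j \equiv j \pmod{1 - \zeta}$, so the image of $u_j$ in $R_C \cong \F_p$ is the class of $j$. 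Letting $j$ range over integers prime to $p$ makes these classes exhaust $\F_p^{\times}$, which yields surjectivity. The crux, and the only place where more than formal manipulation is needed, is precisely this identity $u_j \equiv j$ modulo the prime $(1 - \zeta)$ lying above $p$.
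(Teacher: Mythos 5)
Your proposal is correct and takes essentially the same route as the paper's proof: constant lifts for $(ii)$, and for $(iii)$ the reduction to $n = p^k$ followed by the cyclotomic units $(1-\zeta^j)/(1-\zeta) \equiv j \pmod{1-\zeta}$, exactly the paper's $\xi_a$. The only differences are cosmetic: where the paper cites Justin Chen's lemma for $(i)$ and a reference for the invertibility of the cyclotomic units, you inline short self-contained proofs (decomposition of a finite commutative ring into local rings with unit lifting modulo the Jacobson radical, and the geometric-series inverse), and you compute $\Phi_{p^k}(1) = p$ exactly where the paper only uses that $\Phi_n(1)$ divides $n$.
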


\begin{proof}
$(i)$. Apply Justin Chen's lemma, see \cite[Lemma 16]{Guy16b} or \cite{mathoverflow}.

$(ii)$. Any $u \in (\Z_m)^{\times}$ is the image of the corresponding constant Laurent polynomial which is clearly invertible in $\Z_m\br{X^{\pm 1}}$.

$(iii)$. If $n$ has more than one prime divisor, then $1 - \zeta_n$ is a unit of $\Z\br{\zeta_n}$ by \cite[Proposition 7.6.2.$ii$]{Wei98} and hence $R_C$ is trivial. So we can assume that $n$ is a power of a prime. In this case $\xi_a = \frac{1 - \zeta_n^a}{1 -\zeta_n}$ is a unit for every $a \in \Z$ coprime with $n$ by \cite[Proposition 7.6.2.$ii$]{Wei98}. We have $R_C = \Z/\Phi_n(1) \Z$ where $\Phi_n$ is the $n$-th cyclotomic polynomial and we know that $\Phi_n(1)$ divides $n$. As $\xi_a \equiv a \mod (1 - \zeta_n) \Z\br{\zeta_n}$, the result follows.
\end{proof}

\begin{corollary} \label{CorBS1N}
The Baumslag-Solitar group 
$\BS(1, n) = \Pres{a,b}{aba^{-1} = b^n}$
has infinite recalcitrance if $n = p^d$ with $p$ prime, $d \ge 1$ and $n \ge 11$.
\end{corollary}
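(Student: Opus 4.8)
The plan is to realize $\BS(1,n)$ as one of the split metabelian extensions $R \rtimes_\alpha C$ analysed above, and then to deduce from Proposition \ref{PropSurjection} that its abelianization homomorphism is \emph{not} coessential; the conclusion about recalcitrance is then immediate from Theorem \ref{ThRecalcitranceSoluble}. Concretely, take $C = \langle a\rangle$ infinite cyclic and let $R$ be the quotient of $\ZC$ obtained by sending $a$ to $n$. Because $n = p^d$, this identifies $R$ with $\Z[1/p]$ and $\alpha$ with $n \in R^\times$, and the relation $aba^{-1} = b^n$ exhibits $\BS(1,n)$ as $R \rtimes_\alpha C$ with $b$ corresponding to $1 \in R$. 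In particular $\BS(1,n)$ is a two-generated metabelian, hence finitely generated soluble, group of rank $2$.

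Next I would identify the two groups of units occurring in Proposition \ref{PropSurjection}. On one hand, $R^\times = \{\pm p^k : k \in \Z\}$, so $R^\times = \langle -1\rangle \times \langle p\rangle$. On the other hand, since $1 - \alpha = 1 - n$ is coprime to $p$, reduction modulo $1-\alpha$ inverts $p$ and gives $R_C = R/(1-\alpha)R \cong \Z/(n-1)\Z$, whence $R_C^\times = (\Z/(n-1)\Z)^\times$ has order $\varphi(n-1)$. By Proposition \ref{PropSurjection} the abelianization homomorphism of $\BS(1,n)$ is coessential if and only if the natural map $R^\times \to R_C^\times$ is onto, and the image of this map is the subgroup $H = \langle \overline{-1}, \bar p\rangle$ of $(\Z/(n-1)\Z)^\times$.

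The crux is to show that $H$ is a proper subgroup. The key observation is that $p^d = n \equiv 1 \pmod{n-1}$, so $\bar p$ has multiplicative order dividing $d$; consequently $|H| \le 2\,\operatorname{ord}(\bar p) \le 2d$. It therefore suffices to establish the estimate $\varphi(n-1) > 2d$, and this number-theoretic inequality is what I expect to be the main obstacle. Since $n - 1 = p^d - 1$ while $2d = 2\log_p n$, the totient $\varphi(n-1)$ grows essentially like $n$ whereas $2d$ grows only logarithmically, so the inequality holds for all sufficiently large prime powers; the finitely many small prime powers with $p^d \ge 11$ must be checked individually, and this is the delicate part of the verification. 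Granting $\varphi(n-1) > 2d$, we get $H \subsetneq R_C^\times$, so $R^\times \to R_C^\times$ is not surjective and, by Proposition \ref{PropSurjection}, the abelianization homomorphism is not coessential.

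Finally I would conclude. As $\BS(1,n)$ is finitely generated soluble of rank $2$ with non-coessential abelianization homomorphism, Theorem \ref{ThRecalcitranceSoluble} shows that it fails to satisfy $\GACC{2}$, and hence has normally generating pairs of infinite recalcitrance; equivalently, one invokes Lemma \ref{LemNormallyCoessential}, whose part $(i)$ guarantees that $\pi_{ab}$ is normally coessential while part $(ii)$, read contrapositively, produces a normally generating pair that cannot be reduced to a generating pair by any finite sequence of $\M$-transformations. An explicit such pair is obtained by lifting a generating pair of $\Gab$ whose $R_C^\times$-coordinate lies outside $H$, using \cite{Guy16b} as in the proof of Proposition \ref{PropSurjection}.
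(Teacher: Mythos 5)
Your route is exactly the paper's: the same realization $\BS(1,n) \simeq R \rtimes_{\alpha} C$ with $R = \Z[1/n] = \Z[1/p]$, $C = \Z$, $\alpha = n$; the same computations $R^{\times} = \langle -1, p \rangle$ and $R_C \simeq \Z/(n-1)\Z$; the same order bound $\vert \langle \overline{-1}, \overline{p}\rangle \vert \le 2d$ coming from $\overline{p}^{\,d} = \overline{n} = \overline{1}$; and the same conclusion via Proposition \ref{PropSurjection} combined with Lemma \ref{LemNormallyCoessential} and Theorem \ref{ThRecalcitranceSoluble}. The one step you explicitly leave open --- the inequality $\varphi(n-1) > 2d$ for every prime power $n = p^d \ge 11$ --- is precisely the step the paper's proof disposes of in one unproved line (``For $n \ge 11$, the group $R_C^{\times}$ has strictly more than $2d$ elements''). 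So formally you have reproduced the published argument, with an honest flag where it makes a bare assertion; but since you grant the inequality rather than verify it, your proposal is incomplete at exactly that point.

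Moreover, the flagged step is a genuine gap and not a routine verification: the inequality fails at $n = 16 = 2^4$, where $\varphi(15) = 8 = 2d$. Worse, the cardinality obstruction is not merely inconclusive there --- the map $R^{\times} \to R_C^{\times}$ is actually surjective for $n = 16$, since $\overline{2}$ has order $4$ in $(\Z/15\Z)^{\times}$ and $\overline{-1} \notin \langle \overline{2} \rangle$, so $\langle \overline{-1}, \overline{2}\rangle$ has order $8 = \varphi(15)$ and is all of $(\Z/15\Z)^{\times}$. By Proposition \ref{PropSurjection} the abelianization homomorphism of $\BS(1,16)$ is then coessential, and Theorem \ref{ThRecalcitranceSoluble} gives $\rec(\BS(1,16)) \le 3$; hence no case-checking can close your gap at $n = 16$, and the corollary as stated (together with the paper's one-line totient claim) is in fact wrong for this value. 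For every other prime power $n \ge 11$ your plan does go through: for $d = 1$ the inequality reads $\varphi(p-1) \ge 4$, which holds since $p - 1 \ge 10$; the cases $n \in \{27, 32, 64, 128, 256\}$ can be checked by hand ($\varphi(n-1) = 12, 30, 36, 126, 128$ respectively); and in all remaining cases one has $p^d - 1 > 4d^2$, so $\varphi(m) \ge \sqrt{m}$ (valid for $m > 6$) settles it. Thus what your argument, once the verification is carried out, actually proves is the corollary for all $n = p^d \ge 11$ with $n \neq 16$ --- and your instinct that the ``delicate part'' required individual attention was exactly right, more so than the paper's own proof acknowledges.
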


\begin{proof}
It is well-known that $\BS(1, n) \simeq R \rtimes_{\alpha} C$ with $R = \Z\br{1/n}$, $C = \Z$ and $\alpha = n$. We have $R_C = R/(n - 1)R \simeq \Z/(n - 1) \Z$. 
The unit group of $R$ is the multiplicative subgroup of $\Q^{\ast}$ generated by $-1$ and the prime divisors of $n$. If $n = p^d$ then this group maps onto a cyclic subgroup of  $R_C^{\times}$ of order at most $2d$. For $n \ge 11$, the group $R_C^{\times}$ has strictly more than $2d$ elements, so that $R^{\times} \rightarrow R_C^{\times}$ is not surjective. 
The result now follows from Theorem \ref{PropSurjection}
\end{proof}

\paragraph{\textbf{Acknowledgements}.}
The author is grateful to Tatiana Smirnova-Nagnibeda for pointing out that some Sunic groups of intermediate growth sit in the class $\W$ defined in the introduction.
The author is also thankful to Wolfgang Pitsch, Pierre de la Harpe, Yves de Cornulier, Daniel Oancea and Robert Geoffrey Burns for their helpful comments and their encouragements.

\bibliographystyle{abbrv}
\makeatletter
\renewcommand\@biblabel[1]{#1.}
\makeatother
\bibliography{Biblio}

\end{document}